\newcommand*{\collauthor}[2]{{#1}$^{#2}$}
\newcommand*{\affiliation}[2]{$\mbox{}^{{#2}}${#1}}
\newcommand*{\colltitle}[1]{\textbf{#1}}
\newtheorem{pro}{Proposition}[section]
\newtheorem{theorem}[pro]{Theorem}
\newenvironment{keywords}[1]{\vspace{1cm}\\{\bf \slshape{Keywords}}\quad\slshape{#1}}{}
\newcommand{\nkappa}{\kappa_\nu}
\newcommand{\nmu}{\mu_\nu}
\newcommand{\nsigma}{\sigma_\nu}
\newcommand{\forceindent}{\leavevmode{\parindent=2em\indent}}
\begin{document}
\begin{center}
\begin{Large}
  \colltitle{A Review of Tree-based Approaches to solve Forward-Backward Stochastic Differential Equations}
\end{Large} 
\vspace*{1.5ex}

\begin{sc}
\begin{large}
\collauthor{Long Teng}{}
\end{large}
\end{sc}
\vspace{1.5ex}

\affiliation{Lehrstuhl f\"ur Angewandte Mathematik und Numerische Analysis,\\
Fakult\"at f\"ur Mathematik und Naturwissenschaften,\\
Bergische Universit\"at Wuppertal, Gau{\ss}str. 20, 
42119 Wuppertal, Germany,\linebreak teng@math.uni-wuppertal.de}{} \\

(This version: Jun 2019)
\end{center}
\section*{Abstract}
In this work, we study solving (decoupled) forward-backward stochastic differential equations (FBSDEs) numerically using the regression trees.
Based on the general theta-discretization for the time-integrands, we show how to efficiently use regression tree-based methods to solve the resulting conditional expectations.
Several numerical experiments including high-dimensional problems are provided to demonstrate the accuracy and performance of the tree-based approach.
For the applicability of FBSDEs in financial problems, we apply our tree-based approach to the Heston stochastic volatility model, the high-dimensional pricing problems of a
Rainbow option and an European financial derivative with different interest rates for borrowing and lending.
\begin{keywords}
forward-backward stochastic differential equations (FBSDEs), high-dimensional problem, regression tree
\end{keywords}

\section{Introduction}
It is well-known that many problems (e.g., pricing, hedging) in the field of financial mathematics can be represented in terms of FBSDEs,
which makes problems easier to solve but exhibits usually no analytical solution, see e.g., \cite{Karoui1997a}.
However, compared to the forward stochastic differential equations (SDEs),
it is more challenged to efficiently find an accurate numerical solution of the FBSDEs. In this work, we show how to solve FBSDEs using the
regression tree-based methods.

The general form of (decoupled) FBSDEs reads
\begin{equation}\label{eq:decoupledbsde}
\left\{
\begin{array}{l}
\,\,\, dX_t = a(t, X_t)\,dt + b(t, X_t)\,dW_t,\quad X_0=x_0,\\
-dY_t = f(t, X_t, Y_t, Z_t)\,dt - Z_t\,dW_t,\\
\quad Y_T=\xi=g(X_T),
 \end{array}\right.
 \end{equation}
where $X_t, a \in \mathbb{R}^n,$ $b$ is a $n\times d$ matrix, $f(t, X_t, Y_t, Z_t): [0, T] \times \mathbb{R}^n \times \mathbb{R}^m \times \mathbb{R}^{m\times d} \to \mathbb{R}^m$
is the driver function and $\xi$ is the square-integrable terminal condition.
We see that the terminal condition $Y_T$ depends on the final value of a forward SDE.
For $a=0~\mbox{and}~b=1,$ namely $X_t=W_t,$ one obtains a backward stochastic differential equation (BSDE) of the form
\begin{equation}\label{eq:bsde}
 \left\{
 \begin{array}{l}
-dY_t = f(t, Y_t, Z_t)\,dt - Z_t\,dW_t,\\
 \quad Y_T = \xi = g(W_T),
 \end{array} \right.
\end{equation}
where $Y_t \in \mathbb{R}^m,~W_t=(W^1_t,\cdots, W^d_t)^T$ is a $d$-dimensional Brownian motion and $f:[0,T]\times\mathbb{R}^m\times\mathbb{R}^{m\times d}\to \mathbb{R}^m.$

The existence and uniqueness of solutions of such equations under the Lipschitz conditions on $f, a(t,X_t), b(t,X_t)~\mbox{and}~g$ are proved
by Pardoux and Peng \cite{Pardoux1990, Pardoux1992}.
Since then, many works try to relax this condition, e.g., the uniqueness of solution is extended under more general assumptions for $f$ in \cite{Lepeltier1997}
but only in the one dimensional case. The solution of a (F)BSDE is a pair of adapted processes $(Y, Z),$ the role of $Z,$ namely $Z_t\,dW_t$ is to render the process
$Y$ be adapted. Moreover, in the application, the process $Z$ can possess some useful information. For example, in option pricing problems, the process $Z$ represents the hedging portfolio while the process $Y$ corresponds to the option price.

In recent years, many numerical methods have been proposed for coupled and decoupled (F)BSDEs. For the numerical algorithms with (least-squares) Monte-Carlo approaches we refer to
\cite{Bender2012, Bouchard2004, Gobet2005, Lemor2006, Zhao2006},
the multilevel Monte Carlo method based on Picard approximation for high-dimensional nonlinear BSDEs can be found in \cite{E2019}.
Some numerical methods for BSDEs applying binomial tree are investigated in \cite{Ma2002}.
There exists connection between BSDEs and PDEs, see \cite{Karoui1997b, Peng1991}, some numerical schemes with the aid of this connection can be found e.g., in \cite{Douglas1996, Ma1994, Milstein2006}.
For the deep-learning-based numerical method we refer to \cite{E2017}.
The approach based on the Fourier method for BSDEs is developed in \cite{Ruijter2015}. See also
\cite{Crisan2010} for the numerical schemes using cubature methods and \cite{Teng2018} for the tree-based approach. And many others e.g., \cite{Bally1997, Bender2008, Fu2017, Gobet2010, Ma2009, Ma2005, Zhang2004, Zhang2013, Zhao2010, Zhao2014}.

In this paper, we show how to efficiently use regression tree-based approaches to find accurate approximations of (F)BSDEs \eqref{eq:decoupledbsde} and \eqref{eq:bsde}.
We apply the general theta-discretization method for the time-integrands and approximate the resulting conditional expectations
using the regression tree-based approach.
The schemes with different theta values are analyzed for the tree-based approach.
Several numerical experiments of different types including high-dimensional problems and applications in pricing financial derivatives are performed to demonstrate our findings.
We show numerical examples of $100$-dimensional FBSDE to check the performance and applicability of our tree-based approach for a high-dimensional problem.

In the next section, we start with notation and definitions and discuss in Section 3
the discretization of time-integrands using the theta-method, and derive the reference equations according to the tree-based method.
Section 4 is devoted to how to use the regression tree-based approaches to approximate the conditional expectations.
In Section 5, several numerical experiments on different types of (F)BSDEs including financial applications are provided to show the accuracy and applicability for
high-dimensional problems.
Finally, Section 6 concludes this work.

\section{Preliminaries}
Throughout the paper, we assume that $(\Omega, \mathcal{F}, P; \{\mathcal{F}_t\}_{0\leq t\leq T})$ is a complete, filtered probability space.
In this space, a standard $d$-dimensional Brownian motion $W_t$ with a finite terminal time $T$ is defined, which generates the filtration $\{\mathcal{F}_t\}_{0\leq t\leq T},$ i.e., 
 $\mathcal{F}_t=\sigma\{X_s, 0\leq s \leq t\}$ for FBSDEs or $\mathcal{F}_t=\sigma\{W_s, 0\leq s \leq t\}$ for BSDEs. And the usual hypotheses should be satisfied. We denote the set of all $\mathcal{F}_t$-adapted and square
integrable processes in $\mathbb{R}^d$ with $L^2=L^2(0,T;\mathbb{R}^d).$
A pair of process $(Y_t,Z_t)$ is the solution of the (F)BSDEs \eqref{eq:decoupledbsde} or \eqref{eq:bsde}
if it is $\mathcal{F}_t$-adapted and square integrable and satisfies \eqref{eq:decoupledbsde} or \eqref{eq:bsde} as
\begin{equation}\label{eq:generalBSDE_01}
 Y_t = \xi + \int_{t}^T f(s, (X_s), Y_s, Z_s)\,ds - \int_t^T Z_s\,dW_s,\quad t \in [0, T],
\end{equation}
where $f(t, (X_s), Y_s, Z_s): [0, T]~(\times \mathbb{R}^n) \times \mathbb{R}^m \times \mathbb{R}^{m\times d} \to \mathbb{R}^m$ is $\mathcal{F}_t$ adapted,
$\xi=g(X_T): \mathbb{R}^n \to \mathbb{R}^m ~\mbox{or}~\xi=g(W_T): \mathbb{R}^d \to \mathbb{R}^m.$ As mentioned above, these solutions exist uniquely under Lipschitz conditions. 

Suppose that the terminal value $Y_T$ is of the form $g(X^{t,x}_T),$ where $X^{t,x}_T$ denotes the solution of $dX_t$ in \eqref{eq:decoupledbsde} starting
from $x$ at time $t.$
Then the solution $(Y^{t,x}_t, Z^{t,x}_t)$ of FBSDEs \eqref{eq:decoupledbsde} can be represented
\cite{Karoui1997b, Ma2005, Pardoux1992, Peng1991} as
\begin{equation}\label{eq:pderelation}
 Y^{t,x}_t = u(t, x), \quad Z^{t,x}_t=(\nabla u(t, x))b(t, x) \quad \forall t \in [0, T),
\end{equation}
which is solution of the semi-linear parabolic PDE of the form
\begin{equation}
\frac{\partial u}{\partial t} +  \sum_i^na_i \partial_i u + \frac{1}{2}\sum_{i,j}^n (bb^T)_{i,j} \partial^2_{i,j} u + f(t, x, u, (\nabla u)b)=0
\end{equation}
with the terminal condition $u(T,x)=g(x).$ Clearly, the corresponding PDE to the BSDEs \eqref{eq:bsde} with $\xi=g(W_T):\Omega \times \mathbb{R}^d \to \mathbb{R}^m$ reads
\begin{equation}
\left\{
\begin{array}{r}
 \frac{\partial u}{\partial t} + \frac{1}{2}\sum_i^d \partial^2_{i,i} u + f(t, u, (\nabla u)b)=0,\\
 \quad\quad u(T,x)=g(x).
\end{array}\right.
 \end{equation}
In turn, suppose $(Y, Z)$ is the solution of (F)BSDEs, $u(t, x)=Y^{t,x}_t$ is a viscosity solution to the PDEs.
As mentioned above, BSDE is a special case of FBSDE with $a=0~\mbox{and}~b=1.$ Thus, we introduce the numerical schemes
concerning FBSDEs in the sequel.
\section{Discretization of the FBSDE using theta-method}
For simplicity, we discuss the discretization with one-dimensional processes, namely $m=n=d=1.$
And the extension to higher dimensions is possible and straightforward.
We introduce the time partition for the time interval $[0, T]$
\begin{equation}\label{eq:timepartition}
\Delta_t = \{t_i | t_i \in [0, T], i=0,1,\cdots,N_T, t_i<t_{i+1}, t_0=0, t_{N_T}=T \}.
 \end{equation}
Let $\Delta t_i = t_{i+1}-t_i$ be the time step, and denote the maximum time step with $\Delta t.$
For the FBSDEs, one needs to additionally discretize the forward SDE in \eqref{eq:decoupledbsde} 
\begin{equation}\label{eq:X_int}
 X_t = x_0 + \int_0^t a(s, X_s)\,ds + \int_0^t b(s, X_s)\,dW_s.
\end{equation}
Suppose that the forward SDE \eqref{eq:X_int} can be already discretized by a process $X^{\Delta_t}_{t_i}$
such that
\begin{equation}
 E\left[\max_{t_i}\left|X_{t_i}-X^{\Delta_t}_{t_i}\right|^2\right]=\mathcal{O}({\Delta_t})
\end{equation}
which means strong mean square convergence of order $1/2.$
In the case of that $X_t$ follows a known distribution (e.g., geometric Brownian motion), one can obtain good
samples on $\Delta_t$ using the known distribution, otherwise the Euler scheme can be employed.

Then one needs to discretize the backward process \eqref{eq:generalBSDE_01}, namely
\begin{equation}\label{eq:generalBSDE_02}
 Y_t = \xi + \int_{t}^T f(s, \mathbb{X}_s)\,ds - \int_t^T Z_s\,dW_s, \quad t\in[0, T),
\end{equation}
where $\xi=g(X_T), \mathbb{X}_s=(X_s, Y_s, Z_s).$ 
Let $(Y_t, Z_t)$ be the adapted solution of \eqref{eq:generalBSDE_02}, we thus have
\begin{equation}\label{eq:generalBSDE_03}
 Y_i = Y_{i+1} + \int_{t_i}^{t_{i+1}} f(s, \mathbb{X}_s)\,ds - \int_{t_i}^{t_{i+1}} Z_s\,dW_s,
\end{equation}
where $Y_{t_i}$ is denoted by $Y_i$ for simple notation.
To obtain adaptability of the solution $(Y_t, Z_t),$ we could use conditional expectations $E_i[\cdot](=E[\cdot|\mathcal{F}_{t_i}]).$
We consider firstly to find the reference equation for $Z.$ By multiplying both sides of the equation \eqref{eq:generalBSDE_03} by $\Delta W_{i+1}:=W_{t_{i+1}} - W_{t_{i}}$
and taking the conditional expectations $E_i[\cdot]$ on the both sides of the derived equation we obtain
\begin{equation}\label{eq:disBSDEZ_01}
 -E_i[Y_{i+1}\Delta W_{i+1}] = \int_{t_{i}}^{t_{i+1}}E_i[f(s,\mathbb{X}_s)\Delta W_s]\,ds - \int_{t_{i}}^{t_{i+1}}E_i[Z_s]\,ds,
\end{equation}
where the It\^o isometry and Fubini's theorem are used and $\Delta W_s:=W_s - W_{t_i}.$
Obviously, with respect to the filtration $\mathcal{F}_{t_i},$ the integrands on the right-hand side of \eqref{eq:disBSDEZ_01}
is deterministic of time $s.$ Thus, applying the theta-method gives
\begin{equation}\label{eq:disBSDEZ_02}
\begin{split}
  &-E_i[Y_{i+1}\Delta W_{i+1}] = \Delta t_i(1-\theta_1)E_i[f(t_{i+1},\mathbb{X}_{i+1})\Delta W_{i+1}]-\Delta t_i \theta_2 Z_i\\
  &\hspace*{2cm}-\Delta t_i(1-\theta_2)E_i[Z_{i+1}] + R_{\theta}^{Z_i},\\
  &\approx\Delta t_i(1-\theta_1)E_i[f(t_{i+1},\mathbb{X}_{i+1})\Delta W_{i+1}]-\Delta t_i \theta_2 Z_i
  -\Delta t_i(1-\theta_2)E_i[Z_{i+1}],
\end{split}
\end{equation}
where $\theta_1 \in[0, 1], \theta_2\in[0,1)$ and $R_{\theta}^{Z_i}$ is the discretization error of the integrals in \eqref{eq:disBSDEZ_01}. Therefore, the equation \eqref{eq:disBSDEZ_02}
lead to a time discrete approximation $Z^{\Delta_t}$ for $Z$
\begin{equation}\label{eq:disBSDEZ_03}
 \begin{split}
  Z_i^{\Delta_t}&=\frac{\theta^{-1}_2}{\Delta t_i}E_i[Y_{i+1}^{\Delta_t}\Delta W_{i+1}] + \theta^{-1}_2(1-\theta_1)E_i[f(t_{i+1},\mathbb{X}^{\Delta_t}_{i+1})\Delta W_{i+1}]\\
  &-\theta^{-1}_2(1-\theta_2)E_i[Z_{i+1}^{\Delta_t}].
 \end{split}
\end{equation}

We start now finding the reference equation for $Y.$
We could take the conditional expectations $E_i[\cdot]$ on the both sides of \eqref{eq:generalBSDE_02} to obtain
\begin{equation}\label{eq:disBSDEY_01}
 Y_i = E_i[Y_{i+1}] + \int_{t_i}^{t_{i+1}} E_i[f(s, \mathbb{X}_s)]\,ds.
\end{equation}
Again, the integrand on the right-hand side of \eqref{eq:disBSDEY_01}
is deterministic of time $s$ with respect to the filtration $\mathcal{F}_{t_i}.$  We use again the theta-method and obtain
\begin{equation}\label{eq:yieEiYi1}
 \begin{split}
  Y_i & = E_i[Y_{i+1}] + \Delta{t_i} \theta_3 f(t_i, \mathbb{X}_i) + \Delta{t_i} (1-\theta_3) E_i[f(t_{i+1}, \mathbb{X}_{i+1})] + R_{\theta}^{Y_i} ,\quad \theta_3\in[0, 1]\\
      & \approx E_i[Y_{i+1}] + \Delta{t_i} \theta_3 f(t_i, \mathbb{X}_i) + \Delta{t_i} (1-\theta_3) E_i[f(t_{i+1}, \mathbb{X}_{i+1})],
 \end{split}
\end{equation}
where $R_{\theta}^{Y_i}$ is the discretization error of the integral in \eqref{eq:generalBSDE_02}. 
Due to $\mathbb{X}^{\Delta_t}_i=(X_i^{\Delta_t}, Y_i^{\Delta_t}, Z_i^{\Delta_t}),$ obviously, we have obtained an implicit scheme which can be directly solved by using
iterative methods, e.g., Newton's method or Picard scheme. 

By choosing the different values for $\theta_1$ and $\theta_2,$ one can obtain different schemes.
For example, one receives the Crank-Nicolson scheme by setting $\theta_1=\theta_2=\theta_3=1/2,$ which is second-order accurate.
When $\theta_1=\theta_2=\theta_3=1,$ the scheme is first-order accurate, see \cite{Zhao2006, Zhao2009, Zhao2013}.
In our experiments we find that the numerical second-order convergence rate can only be achieved when the number of samples
is sufficiently large. The convenience rate of the tree-based method is one divided by the square root of sample size,
to receive the accuracy $(\Delta t)^{2}=(\frac{T}{N_T})^{2},$ the number of samples should be around
$(\frac{N_T}{T})^4.$ For example, when $T=0.5~\mbox{and}~N_T=32,$
one needs $64^4$ samples to obtain that accuracy, that is a quite large integer.
Therefore, to evaluate the performance of the tree-based methods with smaller sample size, in this work we will consider the first-order accurate scheme
for solving the FBSDEs by choosing $\theta_1=1/2, \theta_2=1, \theta_3=1/2$:
 \begin{eqnarray}
  Y_{N_T}^{\Delta_t} &=& g(X_{N_T}^{\Delta_t}),\,Z_{N_T}^{\Delta_t}=g_x(X_{N_T}^{\Delta_t}),\label{eq:scheme_01}\\
  ~\mbox{\bf For}~i&=&N_T-1,\cdots,0: \nonumber\\
  Z_i^{\Delta_t}&=&\frac{1}{\Delta t_i}E_i[Y_{i+1}^{\Delta_t}\Delta W_{i+1}] + \frac{1}{2} E_i[f(t_{i+1},\mathbb{X}^{\Delta_t}_{i+1})\Delta W_{i+1}],\label{eq:scheme_02}\\
 Y_i^{\Delta_t}&=&E_i[Y_{i+1}^{\Delta_t}] + \frac{\Delta{t_i}}{2} f(t_i, \mathbb{X}^{\Delta_t}_i) + 
 \frac{\Delta{t_i}}{2} E_i[f(t_{i+1}, \mathbb{X}^{\Delta_t}_{i+1})].\label{eq:scheme_03}
 \end{eqnarray}
The error estimates for the scheme above is given in Section \ref{sec:praticalapp}.

 \section{Computation of conditional expectations with the tree-based approach}
In this section we introduce how to use the tree-based approach to compute the
conditional expectations included in the schemes introduced above, which actually are all of the form $E[Y|X]$ for square integrable
random variables $X$ and $Y.$ Therefore, we present the regression approach based on the form $E[Y|X]$ throughout this section.
 \subsection{Non-parametric regression}
We assume that the model in non-parametric regression reads
\begin{equation}
 Y = \eta(X) + \epsilon,
\end{equation}
where $\epsilon$ has a zero expectation and a constant variance. Obviously, it can be thus implied that
\begin{equation}\label{eq:conditioneta}
 E[Y|X=x]=\eta(x).
\end{equation}
To approximate the conditional expectations, our goal in regression is to find the estimator of this function, $\hat{\eta}(x).$
By non-parametric regression, we are not assuming a particular form for $\eta.$ Instead of, $\hat{\eta}$ is represented in a regression tree.
Suppose we have a set of samples, $(\hat{x}_\mathcal{M}, \hat{y}_\mathcal{M}),\,\mathcal{M}=1,\cdots,M,$ for $(X, Y),$ where $X$ denotes a predictor variable and $Y$ presents the
corresponding response variable. With such samples we construct a regression tree, which can then be used to determine $E[Y|X=x]$ for
an arbitrary $x,$ whose value is not necessarily equal to one of samples $\hat{x}_\mathcal{M}.$ 

As an example, we specify the procedure for \eqref{eq:scheme_02} in case of FBSDEs, namely where $\mathbb{X}^{\Delta_t}_{i+1}=(X_{i+1}^{\Delta_t}, Y_{i+1}^{\Delta_t}, Z_{i+1}^{\Delta_t}).$
We assume that $(X_{i}^{\Delta_t}, \mathcal{F}_{t_i})_{t_i\in \Delta_t}$ is Markovian. Hence, \eqref{eq:scheme_02} can be rewritten as
\begin{equation}\label{eq:example_z}
   Z_i^{\Delta_t}=E\left[\frac{1}{\Delta t_i} Y_{i+1}^{\Delta_t}\Delta W_{i+1} + \frac{1}{2}f(t_{i+1},\mathbb{X}^{\Delta_t}_{i+1})\Delta W_{i+1}|X_{i}^{\Delta_t}\right],\quad i=N_T-1,\cdots,0.
\end{equation}
And there exist deterministic functions $z_i^{\Delta_t}(x)$ such that
\begin{equation}
Z_i^{\Delta_t}=z_i^{\Delta_t}(X_{i}^{\Delta_t}). 
\end{equation}
Starting from the time $T,$ we construct the regression tree $\hat{T}_z$ for the conditional expectation in 
\eqref{eq:example_z} using samples $(\hat{x}_{N_T-1, \mathcal{M}}, \frac{1}{\Delta t_{N_T-1}}\hat{y}_{N_T, \mathcal{M}}\Delta\hat{w}_{N_T,\mathcal{M}}+\frac{1}{2}\hat{f}_{N_T, \mathcal{M}} \Delta\hat{w}_{N_T,\mathcal{M}}).$ Thereby, the function
\begin{equation}\label{eq:zfun}
 z_{N_T-1}^{\Delta_t}(x) = E\left[\frac{1}{\Delta t_{N_T-1}}Y_{N_T}^{\Delta_t}\Delta W_{N_T} + f(t_{N_T},\mathbb{X}^{\Delta_t}_{N_T})\Delta W_{N_T}|X_{N_T-1}^{\Delta_t}=x\right],
\end{equation}
is estimated and presented by a regression tree. Based on the constructed tree, by applying \eqref{eq:zfun} to the samples $\hat{x}_{N_T-1, \mathcal{M}}$ one can directly obtain
the samples $\hat{z}_{N_T-1, \mathcal{M}}$ of the random variable $Z_{N_T-1}^{\Delta_t},$ for $\mathcal{M}=1,\cdots, M.$
Recursively, backward in time, these samples $\hat{z}_{N_T-1, \mathcal{M}}$ will be used to generate samples $\hat{z}_{N_T-2, \mathcal{M}}$ of the random variables $Z_{N_T-2}^{\Delta_t}$ at the time $t_{N_T-2}.$
At the initial time $t=0,$ we have a fix initial value $x_0$ for $dX_t,$ no samples are needed. Using the regression trees constructed at time $t_1$
we obtain the solution $Z_{0}^{\Delta_t}=z_{0}^{\Delta_t}(x_0).$ For the BSDEs, $X_t$ is just the Brownian motion $W_t,$ which has the zero initial value. 
Following the same procedure to the conditional expectations in \eqref{eq:scheme_03}, one obtains implicitly $Y_{0}^{\Delta_t}.$

\subsection{Binary regression tree}\label{sub:binaryreg}
As mentioned above, regression tree is used to estimate relationship between the predictor variable $X$ and the response variable $Y,$
namely to find the estimator $\hat{\eta}$ of $\eta$ in \eqref{eq:conditioneta} and then to predict given future samples of $X.$ In this section, we review the procedure in \cite{Breiman1984, Martinez2007}
for constructing a best regression tree based on the given samples. Basically, we need to grow, prune and finally select the tree.
We firstly give the notation:
\begin{itemize}
 \item $(\hat{x}_\mathcal{M}, \hat{y}_\mathcal{M})$ denote samples, namely observed data.
\item $\hat{t}$ is a node in the tree $\hat{T},$ $\hat{t}_L~\mbox{and}~\hat{t}_R$ are the left and right child nodes.
\item $\mathcal{T}$ is the set of terminal nodes in the the tree $\hat{T}$ with the number $|\mathcal{T}|$
\item $n(\hat{t})$ represents the number of samples in node $\hat{t}.$
\item $\bar{y}(\hat{t})$ is the average of samples falling into node $\hat{t},$ namely predicted response
 \end{itemize}
\paragraph{Growing a Tree}
We define predicted response as the average value of the samples which are contained in a node $\hat{t},$ namely
\begin{equation}
 \bar{y}(\hat{t}) = \frac{1}{n(\hat{t})}\sum_{\hat{x}_\mathcal{M} \in \hat{t}}\hat{y}_\mathcal{M}.
\end{equation}
Obviously, the squared error in the node $\hat{t}$ reads
\begin{equation}
 R(\hat{t}) = \frac{1}{n(\hat{t})}\sum_{\hat{x}_\mathcal{M} \in \hat{t}}(\hat{y}_\mathcal{M}-\bar{y}(\hat{t}))^2.
\end{equation}
The mean squared error for the tree $\hat{T}$ is defined as the sum of the squared errors in all the terminal nodes and given by
\begin{equation}\label{eq:error}
 R(\hat{T})=\sum_{\hat{t}\in\mathcal{T}} R(\hat{t})= \frac{1}{n(\hat{t})}\sum_{\hat{t}\in\mathcal{T}}\sum_{\hat{x}_\mathcal{M} \in \hat{t}}(\hat{y}_\mathcal{M}-\bar{y}(\hat{t}))^2.
\end{equation}
Basically, the tree is constructed by partitioning the space for the samples $\hat{x}$ using a sequence of binary splits. For a split $s~\mbox{and node}~\hat{t},$
the change in the mean squared error can be thus calculated as
\begin{equation}
 \Delta R(s, \hat{t}) = R(\hat{t}) - R(\hat{t}_L) - R(\hat{t}_R).
\end{equation}
The regression tree is thus obtained by
iteratively splitting nodes with $s,$ which yields the largest $\Delta R(s, \hat{t}).$ Thereby, decrease in $R(\hat{T})$ is maximized.
Obviously, the optimal stopping criterion is that all responses in a terminal node are the same, but that is not really realistic. There are some other
criteria are available, e.g., growing the tree until number of samples in a terminal node is five, which is suggested in \cite{Breiman1984}.

\paragraph{Pruning a tree}
When using the optimal stopping criterion, all responses in a terminal node are same, i.e., each terminal node contains only one response, then the error $R(\hat{t}),$ therewith $R(\hat{T}),$ will be zero.
However, first of all, this is unrealistic as already mentioned. Secondly, the samples is thereby over fitted and the regression tree will thus not generalize well to new observed samples. 
Breiman et al. \cite{Breiman1984} suggested growing an overly large regression tree $\hat{T}_{\mbox{max}}$ and then to find nested sequence of sub-trees by successively pruning
branches of the tree. This procedure is called pruning a tree. We define an error-complexity measure as
\begin{equation}
 R_{\alpha}(\hat{T}) =  R(\hat{T}) + \alpha|\mathcal{T}|, \quad \alpha \geq 0,
\end{equation}
where $\alpha$ represents the complexity cost per terminal node. 
The error-complexity should be minimized by looking for trees.
Let $\hat{T}_{\mbox{max}}$ be the overly large tree, in which each terminal node
contains only one response. Thus, we have $R_{\alpha}(\hat{T}_{\mbox{max}})=\alpha|\mathcal{T}|$ which indicates a high cost of complexity, while the error is small. 
To minimize the cost we delete the branches with the weakest link $\hat{t}_k^*$ in tree $\hat{T}_k$, which is defined
as
\begin{equation}
 g_k(\hat{t}_k^*)=\mbox{min}_{\hat{t}}\{g_k(\hat{t})\},\quad g_k(\hat{t})=\frac{R(\hat{t})-R(\hat{T}_{k\hat{t}})}{|\mathcal{T}_{k\hat{t}}|-1},
\end{equation}
where $\hat{T}_{k\hat{t}}$ is the branch $\hat{T}_{\hat{t}}$ corresponding to the internal node $\hat{t}$ of sub-tree $\hat{T}_k.$
Then, we prune the branch defined by the node $\hat{t}_k^*$
\begin{equation}
 \hat{T}_{k+1}=\hat{T}_k-\hat{T}_{\hat{t}_k^*},
\end{equation}
and thus obtain a finite sequence of sub-trees with fewer terminal nodes and decreasing complexity until the root node as
\begin{equation}\label{eq:seq_1}
 \hat{T}_{\mbox{max}} > \hat{T}_{1} > \hat{T}_{2} > \cdots >\hat{T}_{K}=\mbox{root}.
\end{equation}
On the other hand, we set
\begin{equation}
 \alpha_{k+1}=g_k(\hat{t}_k^*)
\end{equation}
and thus obtain an increasing sequence of values for the complexity parameter $\alpha,$ namely
\begin{equation}\label{eq:seq_2}
 0=\alpha_1<\cdots<\alpha_k<\alpha_{k+1}<\cdots\alpha_K.
\end{equation}
By observing the both sequences \eqref{eq:seq_1} and \eqref{eq:seq_2}, it is not difficult to find:
for $k\geq 1,$ the tree $\hat{T}_k$ is the one which has the minimal cost complexity for $\alpha_k\leq \alpha<\alpha_{k+1}.$

\paragraph{Selecting a Tree}
We have to make a trade-off between the both criteria of error and complexity, namely we need to choose the best tree from the sequence of pruned sub-trees such
that the complexity of tree and squared error are both minimized. To do this, there are two possible ways introduced in \cite{Breiman1984, Martinez2007},
namely independent test samples and cross-validation. As an example, we illustrate the independent test sample method, for cross-validation we refer to \cite{Breiman1984, Martinez2007}. 
Clearly, we need honest estimates of the true error $R^*(\hat{T})$ to select the right size of the tree. To obtain that estimates, we should use samples that were not
used to construct the tree to estimate the error. Suppose we have a set of samples $L=(\hat{x}_\mathcal{M}, \hat{y}_\mathcal{M}),$ which should be randomly divided into two subsets $L_1~\mbox{and}~L_2.$
We use the set $L_1$ to grow a large tree and to obtain the sequence of pruned sub-trees. Thus, the samples in $L_2$ is used to evaluate the performance of each sub-tree by
calculating the error between real response and predicated response. We denote the predicated response using samples $\hat{x}$ to the tree $\hat{T}_k$ with $\bar{y}_k(\hat{x}),$ then the estimated error is
\begin{equation}
 \hat{R}(\hat{T}_k)=\frac{1}{n_2}\sum_{(\hat{x}_i, \hat{y}_i)\in L_2} (\hat{y}_i-\bar{y}_k(\hat{x}_i))^2,
\end{equation}
where $n_2$ is the number of samples in $L_2.$ This estimated error will be calculated for all sub-trees. As mentioned above,
if one directly select the tree with the smallest error, then the cost of complexity will be higher. Instead of, we can pick a sub-tree
that has the fewest number of nodes, but still keeps the accuracy of the tree with the smallest error, say $\hat{T}_0$ with the error $\hat{R}_{\mbox{min}}(\hat{T}_0).$
To do this, we define the standard error for this estimate as \cite{Breiman1984}
\begin{equation}\label{eq:SEhatR}
 SE(\hat{R}_{\mbox{min}}(\hat{T}_0)):=\frac{1}{\sqrt{n_2}}\sqrt{\frac{1}{n_2}\sum_{i=1}^{n_2} (\hat{y}_i-\bar{y}(\hat{x_i}))^4 - (\hat{R}_{\mbox{min}}(\hat{T}_0))^2},
\end{equation}
and then choose the smallest tree $\hat{T}_k^*$ such that
\begin{equation}
 \hat{R}(\hat{T}_k^*)\leq \hat{R}_{\mbox{min}}(\hat{T}_0) + SE(\hat{R}_{\mbox{min}}(\hat{T}_0)).
\end{equation}
$\hat{T}_k^*$ is the tree with minimal complexity cost but has equivalent accuracy as the tree with minimum error.

\subsection{Practical Applications}\label{sec:praticalapp}
Note that we do not need to construct the individual tree for each conditional expectation in the schemes.
Due to the linearity of conditional expectation, we construct the trees for all possible combinations of the conditional expectations.
We denote the tree's regression error with $R_{\mbox{tr}},$ the error of used iterative method with $R_{\mbox{impl}}$ and reformulate the scheme \eqref{eq:scheme_01}-\eqref{eq:scheme_03} by
combining conditional expectations and including all errors as
\begin{align*}
  \hat{y}_{N_T, \mathcal{M}} &= g(\hat{x}_{N_T, \mathcal{M}}),\,\hat{z}_{N_T, \mathcal{M}}=g_x(\hat{x}_{N_T, \mathcal{M}}),\\
  ~\mbox{\bf For}~i&=N_T-1,\cdots,0~,~\mathcal{M}=1,\cdots, M:\\
  \hat{z}_{i, \mathcal{M}}&=E^{\hat{x}_{i, \mathcal{M}}}_{i}\left[\frac{1}{\Delta t_i}Y_{i+1}\Delta W_{i+1} + \frac{1}{2}f(t_{i+1},\mathbb{X}_{i+1})\Delta W_{i+1}\right] + \frac{R^{Z_i}_{\theta}}{\Delta t_i} + R_{\mbox{tr}}^{Z_i},\\
   \hat{y}_{i, \mathcal{M}}&=E^{\hat{x}_{i, \mathcal{M}}}_{i}\left[Y_{i+1} + \frac{\Delta{t_i}}{2} f(t_{i+1}, \mathbb{X}_{i+1})\right] + 
   \frac{\Delta{t_i}}{2} \hat{f}_{i, \mathcal{M}} + R^{Y_i}_{\theta}+R^{Y_i}_{\mbox{impl}}+ R_{\mbox{tr}}^{Y_i},
   \end{align*}
 where $E^{\hat{x}_{i, \mathcal{M}}}_{i}[\mathcal{Y}]$ denotes calculated conditional expectation $E[\mathcal{Y}|X=\hat{x}_{i, \mathcal{M}}]$ using the constructed regression tree with the samples of $\mathcal{Y}.$
 For example, using samples of the predictor variable $X_i$ (which are $\hat{x}_{i, \mathcal{M}}$) and samples of the response variable $\frac{1}{\Delta t_i}Y_{i+1}\Delta W_{i+1} + \frac{1}{2}f(t_{i+1},\mathbb{X}_{i+1})\Delta W_{i+1}$
(which are $\frac{1}{\Delta t_i}\hat{y}_{i+1, \mathcal{M}}\Delta\hat{w}_{i+1,\mathcal{M}}+\frac{1}{2}\hat{f}_{i+1, \mathcal{M}} \Delta\hat{w}_{i+1,\mathcal{M}}$) we construct a regression tree.
 Then, $E^{\hat{x}_{i, \mathcal{M}}}_{i}[\frac{1}{\Delta t_i}Y_{i+1}\Delta W_{i+1} + \frac{1}{2}f(t_{i+1},\mathbb{X}_{i+1})\Delta W_{i+1}]$ means the determined value
 of $E[\frac{1}{\Delta t_i}Y_{i+1}\Delta W_{i+1} + \frac{1}{2}f(t_{i+1},\mathbb{X}_{i+1})\Delta W_{i+1}|X=\hat{x}_{i, \mathcal{M}}]$
 using the constructed tree. Note that, at the initial time $t=0,$ we have $\hat{x}_{0, \mathcal{M}}=x_0~\mbox{for}~\mathcal{M}=1,\cdots, M.$
 
\forceindent
From the errors \eqref{eq:error} and \eqref{eq:SEhatR} we can assume that the approximation error of the tree-based approach is approximately $1/\sqrt{n_2}$
for a large number $n_2=\frac{M}{2},$ which is the number of samples in $L_2$ as introduced above. Theoretically, the regression error can be neglected by choosing
sufficiently high $n_2$, namely $M.$ However, the tree-based approach is computationally not that efficient for a quite high value $M.$
For this our idea is to split a quite large set of samples into several small sets of samples, e.g., we can split a set of $20000$ samples into $10$ sets of $2000$ samples.
The major reason is that the many times tree-based computations for a small sample number are still more efficient than one computation for a large sample number. 
We observe, from $t_{N_T} \to t_1$ in the proposed scheme, the samples of $Y^{\Delta_t}_1~\mbox{and}~Z^{\Delta_t}_1$ are generated backward iteratively starting from the samples
of $Y^{\Delta_t}_{N_T}~\mbox{and}~Z^{\Delta_t}_{N_T}.$ When splitting the samples, this procedure can be seen as the projection of samples from $t_{N_T} \to t_1$ but in different groups.
Moreover, for the step $t_1 \to t_0,$ one has a constant as the predictor variable, namely $W_0=0$ for the BSDE or $X_0=x_0$ for the FBSDE. In fact, in the case of constant predictor, the computation can be done rapidly.
We know that the quality of approximations for $Y^{\Delta_t}_0~\mbox{and}~Z^{\Delta_t}_0$ relies directly on the samples of $Y^{\Delta_t}_1~\mbox{and}~Z^{\Delta_t}_1.$ Our numerical results show that the splitting error of samples projection from $t_{N_T} \to t_1$ could be neglected.

Consequently, we propose to split a large sample size into a few groups of small-size samples at $t_{N_T},$
for each group we generate backward iteratively the samples for $Y^{\Delta_t}_1~\mbox{and}~Z^{\Delta_t}_1$\footnote{ Theoretically, the projection of samples in the different groups can be done parallelly.
However, the parallelization is not considered in this work.}.
Then, at $t_1$ we combine the samples of $Y^{\Delta_t}_1~\mbox{and}~Z^{\Delta_t}_1$ from all groups, which are used as the samples of response variables for the last step $t_1 \to t_0,$ whereas the predictor variable is a constant as mentioned already. Note that in the analysis above we have considered a linear regression model, i.e., the proposed scheme is designed to the linear (F)BSDEs.

We summarize our algorithm to solve the FBSDEs as follows.
\begin{itemize}
 \item Generate $M$ samples and split them into $M_g$ different groups, the sample number in each group is $G=M/M_g.$
  \item For each group, namely $\mathcal{M}=1,\cdots, M_g,$ compute
\begin{align*}
  \hat{y}_{N_T, \mathcal{M}} &= g(\hat{x}_{N_T, \mathcal{M}}),\,\hat{z}_{N_T, \mathcal{M}}=g_x(\hat{x}_{N_T, \mathcal{M}}),\\
  ~\mbox{\bf For}~i&=N_T-1,\cdots,1~,~\mathcal{M}=1,\cdots, M_g:\\
  \hat{z}_{i, \mathcal{M}}&=E^{\hat{x}_{i, \mathcal{M}}}_{i}\left[\frac{1}{\Delta t_i}Y_{i+1}^{\Delta_t}\Delta W_{i+1} + \frac{1}{2}f(t_{i+1},\mathbb{X}^{\Delta_t}_{i+1})\Delta W_{i+1}\right] ,\\
   \hat{y}_{i, \mathcal{M}}&=E^{\hat{x}_{i, \mathcal{M}}}_{i}\left[Y_{i+1}^{\Delta_t} + \frac{\Delta{t_i}}{2} f(t_{i+1}, \mathbb{X}^{\Delta_t}_{i+1})\right] + 
   \frac{\Delta{t_i}}{2} \hat{f}_{i, \mathcal{M}}.
   \end{align*}
 \item Collect all the samples of  $(\hat{z}_{1, \mathcal{M}}, \hat{y}_{1, \mathcal{M}})$ for $\mathcal{M}=1,\cdots, M$ and use all these samples to compute
 \begin{align*}
  Z_{0}^{\Delta_t}&=E^{x_0}_{0}\left[\frac{1}{\Delta t_0}Y_{1}^{\Delta_t}\Delta W_{1} + \frac{1}{2}f(t_{1},\mathbb{X}^{\Delta_t}_{1})\Delta W_{1}\right] ,\\
  Y_{0}^{\Delta_t}&=E^{x_0}_{0}\left[Y_{1}^{\Delta_t} + \frac{\Delta{t_0}}{2} f(t_{1}, \mathbb{X}^{\Delta_t}_{1})\right] + 
   \frac{\Delta{t_0}}{2} \hat{f}_{0, \mathcal{M}}.
   \end{align*}
\end{itemize}

\subsection{Error estimates}
Suppose that $R_{\mbox{tr}}$ and $R_{\mbox{impl}}$ can be neglected by choosing $M$ and Picard iterations sufficiently high, we consider the discretization errors in the first place.
We denote the global errors by
\begin{align}
 \epsilon^{Y_i}(X_{i}^{\Delta_t}):&=Y_i(X_{i}^{\Delta_t})-Y_i^{\Delta_t}(X_{i}^{\Delta_t}),\\
 \epsilon^{Z_i}(X_{i}^{\Delta_t}):&=Z_i(X_{i}^{\Delta_t})-Z_i^{\Delta_t}(X_{i}^{\Delta_t}),\\
 \epsilon^{f_i}(X_{i}^{\Delta_t}):&=f(t_i, \mathbb{X}_{i})-f(t_i, \mathbb{X}_{i}^{\Delta_t}).
\end{align}
Firstly, we give some remarks concerning related results on the one-step scheme:
\begin{itemize}
\item The absolute values of the local errors $R_\theta^{Y_i}~\mbox{and}~R_\theta^{Z_i}$ in \eqref{eq:disBSDEZ_02} and \eqref{eq:yieEiYi1} can be bounded by $C(\Delta t_i)^3$
when $\theta_i=1/2,\,i=1, 2, 3$ and by $C(\Delta t_i)^2$ when $\theta_1=1/2, \theta_2=1, \theta_3=1/2,$ where 
$C$ is a constant which can depend on $T, a, b~\mbox{and functions}~f, g$ in \eqref{eq:decoupledbsde}, see e.g., \cite{Zhao2009, Zhao2012, Zhao2013}.
 \item For notation convenience we might omit the dependency of local and global errors on state of the FBSDEs and the discretization errors of $dX_t,$ namely we assume that
$X_i=X_{i}^{\Delta_t}.$
\item For the implicit schemes we will apply Picard iterations which converges for any initial guess when $\Delta t_i$ is small enough. In the following analysis, we consider the equidistant time discretization $\Delta t.$
\end{itemize}

We start to perform the error analysis for the scheme with  $\theta_1=1/2, \theta_2=1, \theta_3=1/2.$ The error analysis for other choices of $\theta_i$ can be done analogously.
For the $Z$-component $(0\leq i \leq N_T-1)$ we have
\begin{equation}\label{eq:epsilonzi}
\epsilon^{Z_i}=E_i^{x_i}[\frac{1}{\Delta t}\epsilon^{Y_{i+1}}\Delta W_{i+1} + \frac{1}{2}\epsilon^{f_{i+1}} \Delta W_{i+1}] + \frac{R_{\theta}^{Z_i}}{\Delta t},
\end{equation}
where the $\epsilon^{f_{i+1}}$ can be bounded using Lipschitz continuity of $f$ by
\begin{equation}
 E_i^{x_{i}}[|\epsilon^{f_{i+1}}|^2] \leq E_i^{x_{i}}[|L(|\epsilon^{Y_{i+1}}| + |\epsilon^{Z_{i+1}}|)|^2] \leq 2 L^2 E_i^{x_{i}}[|\epsilon^{Y_{i+1}}|^2 + |\epsilon^{Z_{i+1}}|^2]
\end{equation}
with Lipschitz constant $L.$ And it holds that
\begin{equation}
 |E_i^{x_{i}}[\epsilon^{Y_{i+1}} \Delta W_{i+1}]|^2=|E_i^{x_{i}}[(\epsilon^{Y_{i+1}}-E_i^{x_{i}}[\epsilon^{Y_{i+1}}])\Delta W_{i+1}]|^2 \leq \Delta t (E_i^{x_{i}}[|\epsilon^{Y_{i+1}}|^2] -|E_i^{x_{i}}[\epsilon^{Z_{i+1}}]|^2).
\end{equation}
Consequently, we calculate
\begin{equation}\label{eq:error_z}
 (\Delta t)^2|\epsilon^{Z_i}|^2 \leq 6 \Delta t (E_i^{x_{i}}[|\epsilon^{Y_{i+1}}|^2]-|E_i^{x_{i}}[\epsilon^{Y_{i+1}}]|^2)
 +3 L (\Delta t)^3 E_i^{x_{i}}[|\epsilon^{Y_{i+1}}|^2+|\epsilon^{Z_{i+1}}|^2] + 6|R_{\theta}^{Z_i}|^2, 
 \end{equation}
 where H\"older's inequality is used.
 
For the $Y$-component in the implicit scheme we have
\begin{equation}
\epsilon^{Y_i}=E^{x_i}_{i}[\epsilon^{Y_{i+1}} + \frac{\Delta{t}}{2}  \epsilon^{f_{i+1}}] + \frac{\Delta{t}}{2}  \epsilon^{f_{i}} + R^{Y_i}_{\theta},
 \end{equation}
Again using Lipschitz continuity, this error can be bounded by
\begin{equation}\label{eq:error_y}
|\epsilon^{Y_i}|\leq |E^{x_i}_{i}[\epsilon^{Y_{i+1}}]|+ \frac{\Delta t L}{2}(|\epsilon^{Y_{i}}|+|\epsilon^{Z_{i}}|)
 +\frac{\Delta t L}{2}E^{x_i}_{i}[|\epsilon^{Y_{i+1}}|+|\epsilon^{Z_{i+1}}|]+R_{\theta}^{Y_i}.
  \end{equation}
By the inequality $(a+b)^2\leq a^2 + b^2 + \gamma \Delta t a^2 + \frac{1}{\gamma \Delta t} b^2$ we calculate
\begin{equation}\label{eq:error_yy}
\begin{split}
 |\epsilon^{Y_i}|^2 &\leq (1+\gamma\Delta t)|E^{x_i}_{i}[\epsilon^{Y_{i+1}}]|^2 + \frac{3(\Delta t L)^2}{2}(|\epsilon^{Y_{i}}|^2+|\epsilon^{Z_{i}}|^2)+\frac{3(\Delta t L)^2}{2}(|\epsilon^{Y_{i+1}}|^2+|\epsilon^{Z_{i+1}}|^2)\\
 &+3|R_{\theta}^{Y_i}|^2+\frac{1}{\gamma}\left(\frac{3\Delta t L^2}{2}(|\epsilon^{Y_{i}}|^2+|\epsilon^{Z_{i}}|^2)
 +\frac{3\Delta t L^2}{2}(|\epsilon^{Y_{i+1}}|^2+|\epsilon^{Z_{i+1}}|^2)+\frac{3|R_{\theta}^{Y_i}|^2}{\Delta t}\right).
\end{split}
\end{equation}
\begin{theorem}\label{theo:convergence}
Given 
\begin{equation*}
  E_{N_T-1}^{x_{N_T-1}}[|\epsilon^{Z_{N_T}}|^2] \thicksim \mathcal{O}((\Delta t)^2), \quad E_{N_T-1}^{x_{N_T-1}}[|\epsilon^{Y_{N_T}}|^2] \thicksim \mathcal{O}((\Delta t)^{2}),
 \end{equation*}
It holds then
\begin{equation}\label{eq:resultstheorem}
E_0^{x_0}\left[|\epsilon^{Y_{i}}|^2 + \frac{\Delta t}{6}|\epsilon^{Z_{i}}|^2\right]\leq Q (\Delta t)^{2},\quad 0 \leq i \leq N_T-1,
\end{equation}
where $Q$ is a constant which only depend on $T, f, g~\mbox{and}~a, b$ in \eqref{eq:decoupledbsde}.
\end{theorem}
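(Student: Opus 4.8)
The plan is to run a backward induction on the time index, from $i=N_T$ down to $i=0$, using the two one-step error recursions \eqref{eq:error_z} and \eqref{eq:error_yy} already derived above. Define the quantity to be controlled as $A_i := E_0^{x_0}[|\epsilon^{Y_i}|^2] + \frac{\Delta t}{6}E_0^{x_0}[|\epsilon^{Z_i}|^2]$; the goal \eqref{eq:resultstheorem} is exactly $A_i \leq Q(\Delta t)^2$. First I would apply the tower property $E_0^{x_0}[E_i^{x_i}[\,\cdot\,]] = E_0^{x_0}[\,\cdot\,]$ to both \eqref{eq:error_z} and \eqref{eq:error_yy}, so that all conditional expectations collapse onto the single outer expectation $E_0^{x_0}$ and the recursions become relations purely among the scalars $E_0^{x_0}[|\epsilon^{Y_i}|^2]$ and $E_0^{x_0}[|\epsilon^{Z_i}|^2]$. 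A convenient technical point is the variance-reduction identity $E_0^{x_0}[|E_i^{x_i}[\epsilon^{Y_{i+1}}]|^2] \leq E_0^{x_0}[|\epsilon^{Y_{i+1}}|^2]$ (Jensen), which lets me discard the conditioning-induced negative terms and work with clean upper bounds.

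The core step is combining the two inequalities into a single recursion for $A_i$. From \eqref{eq:error_z} I get $\frac{\Delta t}{6}E_0^{x_0}[|\epsilon^{Z_i}|^2]$ bounded by $E_0^{x_0}[|\epsilon^{Y_{i+1}}|^2]$ plus terms of order $\Delta t^2\bigl(E_0^{x_0}[|\epsilon^{Y_{i+1}}|^2]+E_0^{x_0}[|\epsilon^{Z_{i+1}}|^2]\bigr)$ plus $\frac{|R_\theta^{Z_i}|^2}{\Delta t}$; from \eqref{eq:error_yy} I get $E_0^{x_0}[|\epsilon^{Y_i}|^2]$ in terms of $E_0^{x_0}[|\epsilon^{Y_{i+1}}|^2]$ and the same-index $\epsilon^{Y_i},\epsilon^{Z_i}$ error terms. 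The free parameter $\gamma$ from the inequality $(a+b)^2 \le a^2+b^2+\gamma\Delta t\, a^2+\frac{1}{\gamma\Delta t}b^2$ should be fixed large enough (but $\Delta t$-independent) so that the implicit same-index coefficients $\tfrac{3(\Delta t L)^2}{2}+\tfrac{3\Delta t L^2}{2\gamma}$ multiplying $|\epsilon^{Y_i}|^2$ and $|\epsilon^{Z_i}|^2$ can be absorbed into the left-hand side for $\Delta t$ small; here I would use the $Z$-bound to replace $E_0^{x_0}[|\epsilon^{Z_i}|^2]$ on the right and then move everything to the left. After this algebra the recursion takes the Gronwall-ready form $A_i \leq (1+C\Delta t)\,A_{i+1} + C\bigl(\tfrac{|R_\theta^{Y_i}|^2}{\Delta t}+\tfrac{|R_\theta^{Z_i}|^2}{\Delta t^2}\bigr)$ for a constant $C$ depending only on $L,\gamma$.

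I would then insert the local-error bounds from the first remark: with $\theta_1=1/2,\theta_2=1,\theta_3=1/2$ one has $|R_\theta^{Y_i}|,|R_\theta^{Z_i}| \le C(\Delta t)^2$, so $\tfrac{|R_\theta^{Y_i}|^2}{\Delta t}\sim\mathcal{O}((\Delta t)^3)$ and $\tfrac{|R_\theta^{Z_i}|^2}{\Delta t^2}\sim\mathcal{O}((\Delta t)^2)$, i.e. the inhomogeneous term per step is $\mathcal{O}((\Delta t)^2)$. Applying the discrete Gronwall lemma $A_i \le (1+C\Delta t)\,A_{i+1}+D(\Delta t)^2$ over the at most $N_T = T/\Delta t$ steps, the accumulated source contributes $N_T\cdot D(\Delta t)^2\cdot e^{CT}\sim\mathcal{O}(\Delta t)$ — so to reach the stated $\mathcal{O}((\Delta t)^2)$ one crucial factor is that the per-step source is genuinely $\mathcal{O}((\Delta t)^{3})$ after the $Z$-coupling is accounted for, which is why the theorem packages $|\epsilon^Y|^2$ together with the $\Delta t$-weighted $|\epsilon^Z|^2$. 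The terminal seed $A_{N_T}\sim\mathcal{O}((\Delta t)^2)$ is exactly the hypothesis, and propagating it gives $A_i\le Q(\Delta t)^2$ uniformly.

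The main obstacle I anticipate is the bookkeeping around the $Z$-component: because \eqref{eq:error_z} carries the factor $(\Delta t)^2$ on the left, the $Z$-error enters the $Y$-recursion at an unfavorable scaling, and one must choose the weight $\tfrac{\Delta t}{6}$ in $A_i$ and the constant $\gamma$ precisely so that the quadratic $(\Delta t)^2$ self-coupling terms in $|\epsilon^{Z_i}|^2$ and $|\epsilon^{Y_i}|^2$ are dominated and the resulting Gronwall constant stays $\Delta t$-independent. Tracking that the final accumulated error is $\mathcal{O}((\Delta t)^2)$ rather than $\mathcal{O}(\Delta t)$ — i.e. verifying that no term degrades the order — is the delicate part; everything else is a routine discrete Gronwall argument.
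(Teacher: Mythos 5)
Your overall architecture (combine \eqref{eq:error_z} and \eqref{eq:error_yy} into a recursion for the weighted quantity $A_i$, absorb the same-index terms by taking $\gamma$ large but $\Delta t$-independent, then apply discrete Gronwall together with the local-error bounds $|R_\theta^{Y_i}|,|R_\theta^{Z_i}|\le C(\Delta t)^2$) is the same as the paper's. But there is a genuine gap at the core step: you propose to ``discard the conditioning-induced negative terms'' and use Jensen, $|E_i^{x_i}[\epsilon^{Y_{i+1}}]|^2\le E_i^{x_i}[|\epsilon^{Y_{i+1}}|^2]$, to work with ``clean upper bounds.'' This destroys the argument. The $Z$-inequality \eqref{eq:error_z}, after dividing by $6\Delta t$, contributes the conditional \emph{variance} $E_i^{x_i}[|\epsilon^{Y_{i+1}}|^2]-|E_i^{x_i}[\epsilon^{Y_{i+1}}]|^2$, while the $Y$-inequality \eqref{eq:error_yy} contributes $(1+\gamma\Delta t)\,|E_i^{x_i}[\epsilon^{Y_{i+1}}]|^2$. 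The paper adds these \emph{before} any Jensen step, so the $\pm|E_i^{x_i}[\epsilon^{Y_{i+1}}]|^2$ terms cancel, leaving $E_i^{x_i}[|\epsilon^{Y_{i+1}}|^2]+\gamma\Delta t\,|E_i^{x_i}[\epsilon^{Y_{i+1}}]|^2$; Jensen is applied only to the leftover term carrying the $\gamma\Delta t$ prefactor, so the coefficient of $E_i^{x_i}[|\epsilon^{Y_{i+1}}|^2]$ stays $1+\mathcal{O}(\Delta t)$. If instead you drop $-|E_i^{x_i}[\epsilon^{Y_{i+1}}]|^2$ in the $Z$-bound and apply Jensen in the $Y$-bound, the two contributions add up to $(2+\gamma\Delta t)\,E_i^{x_i}[|\epsilon^{Y_{i+1}}|^2]$: your claimed Gronwall-ready form $A_i\le(1+C\Delta t)A_{i+1}+\dots$ is in fact $A_i\le(2+C\Delta t)A_{i+1}+\dots$, and iterating produces a factor $2^{N_T-i}\sim 2^{T/\Delta t}$ that diverges as $\Delta t\to 0$. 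The retained negative variance term is precisely the mechanism that makes the weighted quantity $|\epsilon^{Y_i}|^2+\frac{\Delta t}{6}|\epsilon^{Z_i}|^2$ close into a usable recursion; it cannot be thrown away.

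A second, smaller inconsistency: in your Gronwall-ready form you write the $Z$ source as $\frac{|R_\theta^{Z_i}|^2}{(\Delta t)^2}$, which is $\mathcal{O}((\Delta t)^2)$ per step and, as you yourself observe, would only give an accumulated error of $\mathcal{O}(\Delta t)$. Your second paragraph has it right: weighting \eqref{eq:error_z} by $\frac{\Delta t}{6}$ (i.e., dividing by $6\Delta t$) makes the $Z$ source $\frac{|R_\theta^{Z_i}|^2}{\Delta t}\sim\mathcal{O}((\Delta t)^3)$ per step, so that summing over the $N_T\sim T/\Delta t$ steps yields the stated $\mathcal{O}((\Delta t)^2)$; this matches the paper's final display, where the accumulated sources are $3E_i^{x_i}[|R_\theta^{Y_j}|^2]+\frac{E_i^{x_i}[|R_\theta^{Y_j}|^2]}{6L^2\Delta t}+\frac{E_i^{x_i}[|R_\theta^{Z_j}|^2]}{\Delta t}$. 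So this second issue is self-correcting in your write-up, but the first one is not: as written, the proof does not go through.
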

\begin{proof}
 By combining both \eqref{eq:error_z} and \eqref{eq:error_yy} we straightforwardly obtain
 \begin{align}
E_i^{x_i}[|\epsilon^{Y_i}|^2] &+ \frac{\Delta t}{6}E_i^{x_i}[|\epsilon^{Z_i}|^2]
\leq (1+\gamma\Delta t)|E^{x_i}_{i}[\epsilon^{Y_{i+1}}]|^2 + \frac{3(\Delta t L)^2}{2}(E_i^{x_i}|\epsilon^{Y_{i}}|^2+E_i^{x_i}|\epsilon^{Z_{i}}|^2)\nonumber\\
&+\frac{3(\Delta t L)^2}{2}(E_i^{x_i}|\epsilon^{Y_{i+1}}|^2+E_i^{x_i}|\epsilon^{Z_{i+1}}|^2)+3E_i^{x_i}[|R_{\theta}^{Y_i}|^2]\nonumber\\
 &+\frac{1}{\gamma}\left(\frac{3\Delta t L^2}{2}(E_i^{x_i}|\epsilon^{Y_{i}}|^2+E_i^{x_i}|\epsilon^{Z_{i}}|^2)
 +\frac{3\Delta t L^2}{2}(E_i^{x_i}|\epsilon^{Y_{i+1}}|^2+E_i^{x_i}|\epsilon^{Z_{i+1}}|^2)+\frac{3E_i^{x_i}[|R_{\theta}^{Y_i}|^2]}{\Delta t}\right)\nonumber\\
 &+ (E_i^{x_{i}}[|\epsilon^{Y_{i+1}}|^2]-|E_i^{x_{i}}[\epsilon^{Y_{i+1}}]|^2)
 +\frac{L}{2} (\Delta t)^2 E_i^{x_{i}}[|\epsilon^{Y_{i+1}}|^2+|\epsilon^{Z_{i+1}}|^2] + \frac{E_i^{x_i}[|R_{\theta}^{Z_i}|^2]}{\Delta t} \nonumber
 \end{align}
 which implies
 \begin{align}
  &\left(1-\frac{3(\Delta t L)^2}{2}-\frac{3\Delta t L^2}{2\gamma}\right)E_i^{x_i}[|\epsilon^{Y_i}|^2] + 
  \left(\frac{\Delta t}{6}-\frac{3(\Delta t L)^2}{2}-\frac{3\Delta t L^2}{2\gamma}\right)E_i^{x_i}[|\epsilon^{Z_i}|^2]\nonumber\\
 & \leq \left(1 + \gamma \Delta t +2(\Delta t L)^2 + \frac{3\Delta t L^2}{2\gamma}\right)E_i^{x_i}[|\epsilon^{Y_{i+1}}|^2] + \left(2(\Delta t L)^2 + \frac{3\Delta t L^2}{2\gamma}\right)E_i^{x_i}[|\epsilon^{Z_{i+1}}|^2] \nonumber\\
 & + 3E_i^{x_i}[|R_{\theta}^{Y_i}|^2] + \frac{3E_i^{x_i}[|R_{\theta}^{Y_i}|^2]}{\gamma \Delta t} + \frac{E_i^{x_i}[|R_{\theta}^{Z_i}|^2]}{\Delta t}.\nonumber
 \end{align}
 We choose $\gamma$ such that
 $\frac{\Delta t}{6} -\frac{3\Delta t L^2}{2\gamma} \geq \frac{3\Delta t L^2}{2\gamma} (i.e. \gamma \geq 18 L^2),$ by which the
 latter inequality can be rewritten as
 \begin{align}
  &\left(1-\frac{3(\Delta t L)^2}{2}-\frac{3\Delta t L^2}{2\gamma}\right)E_i^{x_i}[|\epsilon^{Y_i}|^2] + 
  \left(\frac{\Delta t}{6}-\frac{3(\Delta t L)^2}{2}-\frac{3\Delta t L^2}{2\gamma}\right)E_i^{x_i}[|\epsilon^{Z_i}|^2]\nonumber\\
 & \leq \left(1 + \gamma \Delta t +2(\Delta t L)^2 + \frac{3\Delta t L^2}{2\gamma}\right)E_i^{x_i}[|\epsilon^{Y_{i+1}}|^2] + \left(2(\Delta t L)^2 + \frac{\Delta t}{6} -\frac{3\Delta t L^2}{2\gamma} \right)E_i^{x_i}[|\epsilon^{Z_{i+1}}|^2] \nonumber\\
 & + 3E_i^{x_i}[|R_{\theta}^{Y_i}|^2] + \frac{3E_i^{x_i}[|R_{\theta}^{Y_i}|^2]}{\gamma \Delta t} + \frac{E_i^{x_i}[|R_{\theta}^{Z_i}|^2]}{\Delta t},\nonumber
 \end{align}
 which implies
 \begin{align}
  E_i^{x_i}[|\epsilon^{Y_i}|^2] + \frac{\Delta t}{6}E_i^{x_i}[|\epsilon^{Z_i}|^2] &\leq \frac{1+C \Delta t}{1-C \Delta t}
  \left(E_i^{x_i}[|\epsilon^{Y_{i+1}}|^2] + \frac{\Delta t}{6}E_i^{x_i}[|\epsilon^{Z_{i+1}}|^2]\right)\nonumber\\
  &+3E_i^{x_i}[|R_{\theta}^{Y_i}|^2] + \frac{E_i^{x_i}[|R_{\theta}^{Y_i}|^2]}{6 L^2 \Delta t} + \frac{E_i^{x_i}[|R_{\theta}^{Z_i}|^2]}{\Delta t}.\nonumber
 \end{align}
By induction, we obtain then
\begin{align}
  E_i^{x_i}[|\epsilon^{Y_i}|^2] + \frac{\Delta t}{6}E_i^{x_i}[|\epsilon^{Z_i}|^2] &\leq \left(\frac{1+C \Delta t}{1-C \Delta t}\right)^{N_T-i}
  \left(E_{N_T-1}^{x_{N_T-1}}[|\epsilon^{Y_{N_T}}|^2] + \frac{\Delta t}{6}E_{N_T-1}^{x_{N_T-1}}[|\epsilon^{Z_{N_T}}|^2]\right)\nonumber\\
  &+\sum_{j=i}^{N_T-1}\left(\frac{1+C \Delta t}{1-C \Delta t}\right)^{j-i}\left(3E_i^{x_i}[|R_{\theta}^{Y_j}|^2] + \frac{E_i^{x_i}[|R_{\theta}^{Y_j}|^2]}{6 L^2 \Delta t} + \frac{E_i^{x_i}[|R_{\theta}^{Z_j}|^2]}{\Delta t}\right)\nonumber\\
  &\leq \exp(2CT)
  \left(E_{N_T-1}^{x_{N_T-1}}[|\epsilon^{Y_{N_T}}|^2] + \frac{\Delta t}{6}E_{N_T-1}^{x_{N_T-1}}[|\epsilon^{Z_{N_T}}|^2]\right)\nonumber\\
  &+\exp(2CT)\sum_{j=i}^{N_T-1}\left(3E_i^{x_i}[|R_{\theta}^{Y_j}|^2] + \frac{E_i^{x_i}[|R_{\theta}^{Y_j}|^2]}{6 L^2 \Delta t} + \frac{E_i^{x_i}[|R_{\theta}^{Z_j}|^2]}{\Delta t}\right).\nonumber
 \end{align}
 With the known conditions and bounds of the local errors we complete the proof.
\end{proof}

\section{Numerical experiments}\label{sec:numexp}
In this section we use some numerical examples to show the accuracy of our methods for solving the (F)BSDEs.
As already introduced above, $N_T~\mbox{and}~M$ are the total discrete time steps and sampling number, respectively.
For all the examples, we consider an equidistant time and perform $20$ Picard iterations.
We ran the algorithms $10$ times independently and take average value of absolute error, whereas the two different
seeds are used for every five simulations.
Numerical experiments were performed with an Intel(R) Core(TM) i5-8500 CPU @ 3.00GHz and 15 GB RAM.

\subsection{Example of BSDE}\label{sec:bsdeexample}
The first BSDE we consider is 
\begin{equation}\label{eq:example02}
\left\{
\begin{array}{l}
-dY_t = (\frac{Y_t}{2}-\frac{Z_t}{2})\,dt - Z_t\,dW_t,\\  
 \quad Y_T = \sin(W_T + \frac{T}{2}),
 \end{array}\right.
 \end{equation}
 with the analytical solution
 \begin{equation}\label{eq:example02s}
 \left\{
 \begin{array}{l}
 Y_t = \sin(W_t + \frac{t}{2}),\\  
 Z_t = \cos(W_t + \frac{t}{2}).
 \end{array}\right.
 \end{equation}
The generator $f$ is highly oscillatory function and contains the component $Z_t.$ 
For this example we set $T=\frac{1}{2},$ the analytical solution
of $(Y_0, Z_0)$ is $(0, 1).$

Firstly, in order to see the computational acceleration by using the samples-splitting introduced above, we compare the scheme between
using and not using the samples-splitting in Figure \ref{fig:01}. Since the algorithm without splitting are slow, we thus compare them up
to the sample size 50000, whereas $N_T$ is fixed to $10.$
Let $Y^{\Delta_t}_{0,k}$ and $Z^{\Delta_t}_{0,k}$ denote the result on the $k$-th run of the algorithm, $k=1,\cdots, 10,$
the approximations read as $Y^{\Delta_t}_0=\frac{1}{10}\sum_{k=1}^{10}Y^{\Delta_t}_{0,k}$ and $Z^{\Delta_t}_0=\frac{1}{10}\sum_{k=1}^{10}Z^{\Delta_t}_{0,k}.$
In our tests we consider average of the absolute errors, i.e., $\frac{1}{10}\sum_{k=1}^{10} |Y^{\Delta_t}_{0,k}-Y_0|$ and $\frac{1}{10}\sum_{k=1}^{10} |Z^{\Delta_t}_{0,k}-Z_0|.$
\begin{figure}[htbp!]
 \centering
 \begin{subfigure}[b]{0.47\textwidth}
 \includegraphics[width=\textwidth]{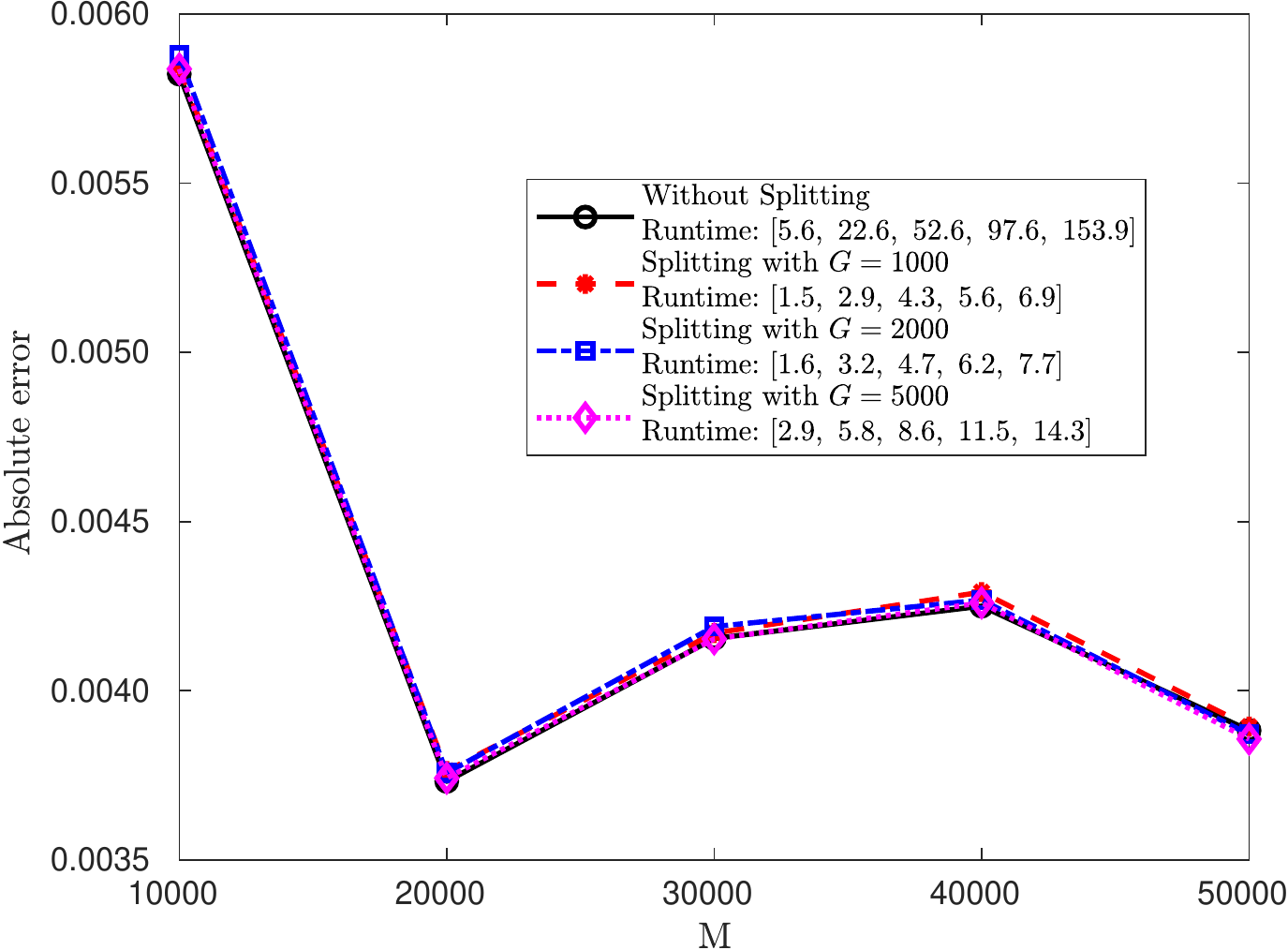}
 \subcaption{Absolute error: $\frac{1}{10}\sum_{k=1}^{10} |Y^{\Delta_t}_{0,k}-Y_0|$}
\end{subfigure}
 ~ 
 \begin{subfigure}[b]{0.47\textwidth}
 \includegraphics[width=\textwidth]{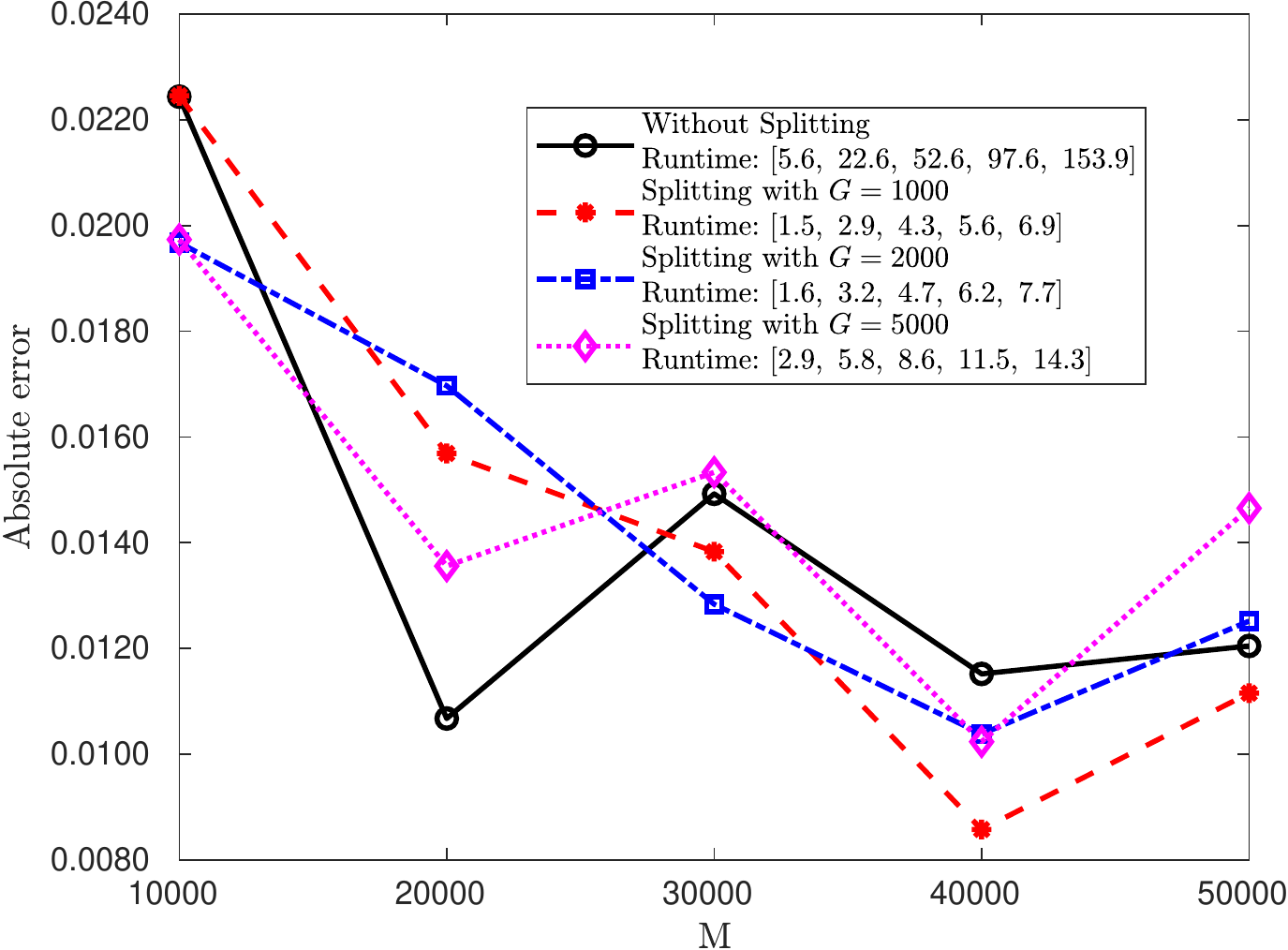}
 \subcaption{Absoulte error: $\frac{1}{10}\sum_{k=1}^{10} |Z^{\Delta_t}_{0,k}-Z_0|$}
\end{subfigure}
 \caption{Comparison of absolute errors among schemes not using and using sample-splitting ( $\theta_1=\frac{1}{2}, \theta_2=1, \theta_3=\frac{1}{2}$) with different sample sizes of group $(G),$
 the average runtimes are given in seconds. }\label{fig:01}
\end{figure}
We see that there are no considerable differences between using and not using the sample-splitting for approximating $Y_0.$
And the approximations of $Z_0$ with the sample-splitting against $M$ converge in a very stable fashion.
Furthermore, the application of sample-splitting allows a much efficient computation, e.g., when $M=50000,$ the scheme without splitting used
$153.9$ seconds while it used only $6.9$ seconds by using the splitting with $G=1000.$
In the remaining of this paper we perform all the schemes always using the splitting with $G=1000,$ unless otherwise specified.

Next we study the influence of $M$ on the error. This is a good example to test performances of the tree-based approach based on different schemes by choosing $\theta_i$'s values,
since the generator $f$ is linear and the exact solutions of $(Y_T, Z_T)$ are known. For this we fix the number of steps to $2$ and test all possible values of $\theta_i.$
We find that the explicit schemes for $\theta_3=0, \theta_2=1, \theta_1=1/2, 1$ and the implicit schemes for $\theta_3=1/2, 1, \theta_2=1, \theta_1=1/2, 1$ can converge for a small $M,$ all others need a very large number $M.$ 
As an example we report the absolute errors $\frac{1}{10}\sum_{k=1}^{10}|Y^{\Delta_t}_{0,k} - Y_0|,$ $\frac{1}{10}\sum_{k=1}^{10}|Z^{\Delta_t}_{0,k} - Z_0|$ and
the empirical standard deviations $\sqrt{\frac{1}{9}\sum_{k=1}^{10}|Y^{\Delta_t}_{0,k} - Y^{\Delta_t}_0|^2},$
$\sqrt{\frac{1}{9}\sum_{k=1}^{10}|Z^{\Delta_t}_{0,k} - Z^{\Delta_t}_0|^2}$ for some chosen schemes in Table \ref{table:01}. 
\begin{table}
	\centering 
	\begin{tabular}{c*{8}{c}r}
		\hline
		$N_T$& \multicolumn{7}{c}{$2$} \\
		\hline
		$M$ & $2000$ & $5000$ & $10000$ & $50000$ & $100000$ & $200000$ & $300000$   \\
		\hline
		& \multicolumn{7}{c}{$\frac{1}{10}\sum_{k=1}^{10} |Y^{\Delta_t}_{0,k}-Y_0|$} \\
		\hline
		$(\theta_1=1, \theta_2=1, \theta_3=\frac{1}{2})$& $0.0254$ & $0.0220$ & $0.0251$ & $0.0248$ & $0.0247$ & $0.0244$ &$0.0246$   \\
		standard deviation & $0.0193$ & $0.0147$ & $0.0099$ & $0.0023$ & $0.0021$ & $0.0016$ &8.3067e-04   \\
		\hline
		$(\theta_1=\frac{1}{2}, \theta_2=1, \theta_3=\frac{1}{2})$ & $0.0177$ & $0.0128$ & $0.0125$ & $0.0123$ & $0.0121$ & $0.0118$ &$0.0120$  \\
		standard deviation & $0.0196$ & $0.0151$ & $0.0102$ & $0.0023$ & $0.0021$ & $0.0017$ &9.1699e-04   \\
		\hline
		$(\theta_1=\frac{1}{2}, \theta_2=\frac{1}{2}, \theta_3=\frac{1}{2})$ & $0.0169$ & $0.0129$ & $0.0073$ & $0.0020$  & $0.0019$  & $0.0017$  & 7.2826e-04   \\
		standard deviation & $0.0197$ & $0.0162$ & $0.0113$ & $0.0025$ & $0.0022$ & $0.0019$ &$0.0011$   \\
		\hline
			$(\theta_1=1, \theta_2=1, \theta_3=1)$ & $0.0171$ & $0.0124$ & $0.0070$ & $0.0022$  & $0.0020$& $0.0019$  & $0.0017$   \\
		standard deviation & $0.0197$ & $0.0159$ & $0.0110$ & $0.0024$ & $0.0021$ & $0.0019$ &$0.0010$   \\
		\hline
		\hline
		& \multicolumn{7}{c}{$\frac{1}{10}\sum_{k=1}^{10} |Z^{\Delta_t}_{0,k}-Z_0|.$} \\
		\hline
		$(\theta_1=1, \theta_2=1, \theta_3=\frac{1}{2})$ & $0.1221$ & $0.1210$ & $0.1190$ & $0.1166$ & $0.1187$ & $0.1197$ &$0.1201$   \\
		standard deviation & $0.0303$ & $0.0199$ & $0.0147$ & $0.0079$ & $0.0037$ & $0.0032$ &$0.0025$  \\
		\hline
		$(\theta_1=\frac{1}{2}, \theta_2=1, \theta_3=\frac{1}{2})$ & $0.0579$ & $0.0578$ & $0.0562$ & $0.0537$ & $0.0561$ & $0.0575$ &$0.0578$  \\
		standard deviation &  $0.0319$ & $0.0235$ & $0.0165$ & $0.0081$ & $0.0042$ & $0.0036$ &$0.0027$ \\
		\hline
		$(\theta_1=\frac{1}{2}, \theta_2=\frac{1}{2}, \theta_3=\frac{1}{2})$&$0.0991$ & $0.0453$ & $0.0295$ & $0.0158$ & $0.0144$ & $0.0074$ &$0.0058$   \\
		standard deviation & $0.1111$ & $0.0550$ & $0.0312$ & $0.0171$ & $0.0173$ & $0.0077$ &$0.0060$   \\
		\hline
			$(\theta_1=1, \theta_2=1, \theta_3=1)$&$0.1114$ & $0.1111$ & $0.1095$ & $0.1072$ & $0.1095$ & $0.1107$ &$0.1112$   \\
		standard deviation & $0.0300$ & $0.0230$ & $0.0151$ & $0.0079$ & $0.0042$ & $0.0035$ &$0.0028$   \\
		\hline
	\end{tabular}
	\caption{Comparison of absolute errors for $N_T=2$ against the sample size $M.$}\label{table:01}
\end{table}
We observe, even for $N_T=2,$ the second-order scheme $(\theta_1=\frac{1}{2}, \theta_2=\frac{1}{2}, \theta_3=\frac{1}{2})$
converges only for a quite large $M.$ In particular, the error $|Z_0-Z^{\Delta_t}_0|$ approaches the convergence value first from $M=200000.$
Since error for the scheme $(\theta_1=\frac{1}{2}, \theta_2=1, \theta_3=\frac{1}{2})$ is smallest of all the schemes, which converge for a small value of $M.$ This is the reason why we will consider the scheme for $(\theta_1=\frac{1}{2}, \theta_2=1, \theta_3=\frac{1}{2})$ for the following analysis and almost all the examples.  
To take this a step further, we fix now $M=200000$ and plot the absolute error against the number of steps in Figure \ref{fig:011} when using $(\theta_1=\frac{1}{2}, \theta_2=1, \theta_3=\frac{1}{2}).$
\begin{figure}[htbp!]
 \centering
 \begin{subfigure}[b]{0.43\textwidth}
 \includegraphics[width=\textwidth]{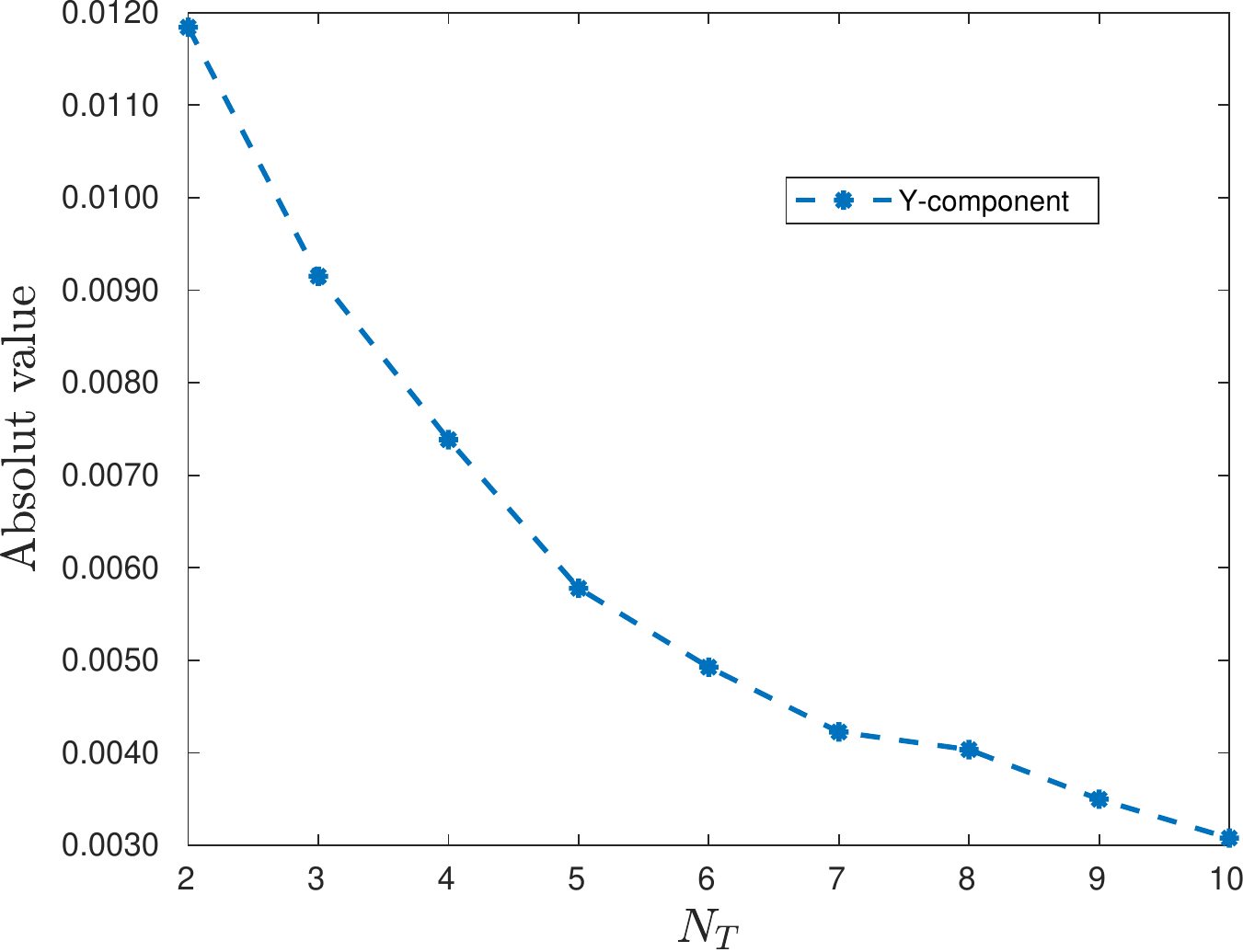}
 \subcaption{Absolute error: $\frac{1}{10}\sum_{k=1}^{10} |Y^{\Delta_t}_{0,k}-Y_0|$}
\end{subfigure}
 ~ 
 \begin{subfigure}[b]{0.43\textwidth}
 \includegraphics[width=\textwidth]{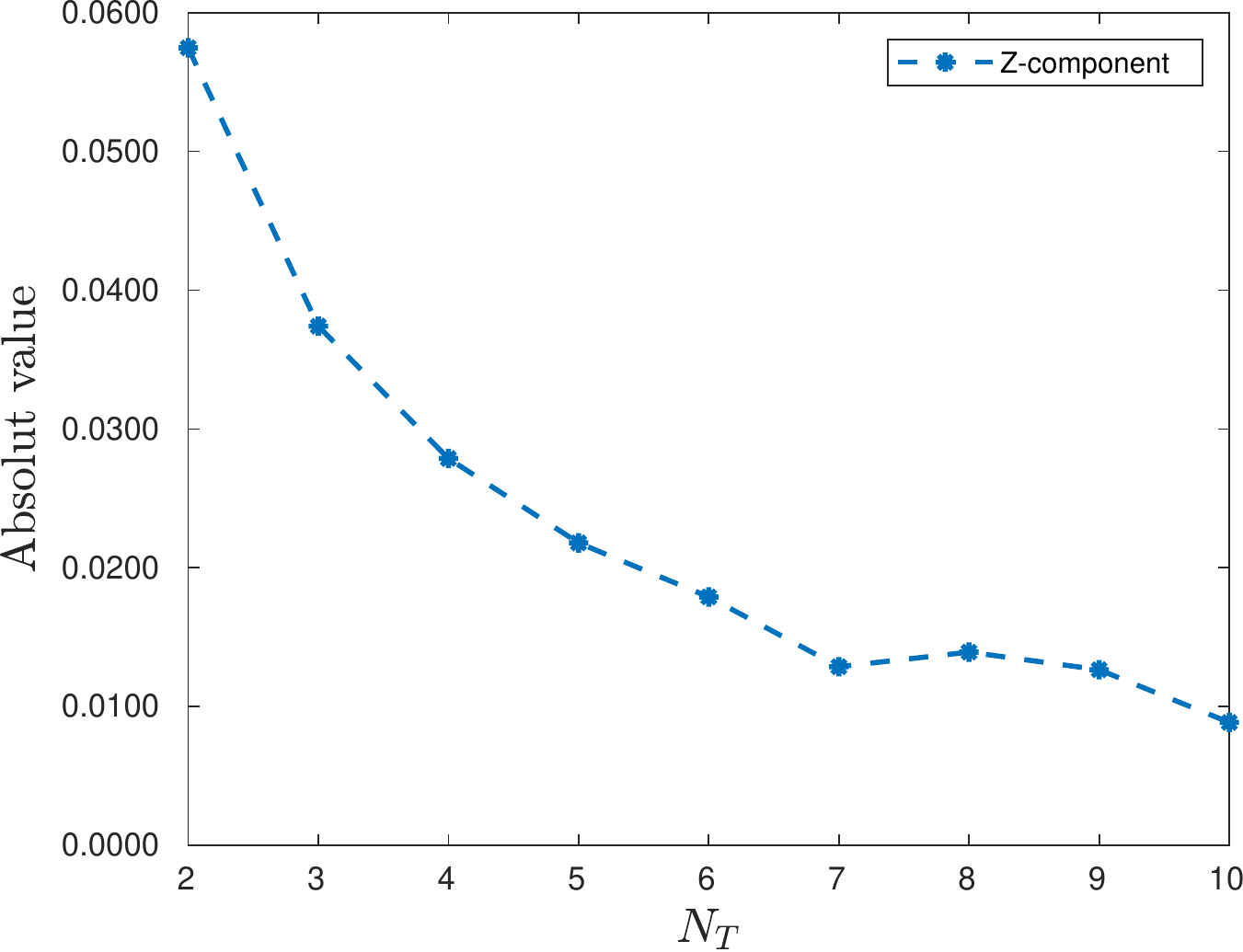}
 \subcaption{Absoulte error: $\frac{1}{10}\sum_{k=1}^{10} |Z^{\Delta_t}_{0,k}-Z_0|$}
\end{subfigure}
 \caption{Comparison of absolute errors against the number of steps $N_T$ for $\theta_1=\frac{1}{2}, \theta_2=1, \theta_3=\frac{1}{2},$ and $M=200000.$ }\label{fig:011}
\end{figure}
We see that the scheme converges meaningfully.

For the convergence with respect to the time step we refer to Figure \ref{fig:02}, where
we plot $\log_2\left(\frac{1}{10}\sum_{k=1}^{10}|Y^{\Delta_t}_{0,k} - Y_0|\right)$ and $\log_2\left(\frac{1}{10}\sum_{k=1}^{10}|Z^{\Delta_t}_{0,k} - Z_0|\right)$ with respect to $\log_2(N_T).$  To estimate
the convergence rate with respect to the time step sizes we adjust roughly sample sizes $M$ according to
the time partitions, i.e., larger $M$ for smaller $dt,$ the used sample sizes $M$ are listed in Table \ref{table:02}.
\begin{table}
\centering 
\begin{tabular}{c*{7}{c}r}
\hline
$N_T$ & 2 & 4 & 8 & 16 & 32 &   \\
\hline
$M$ & 1000 & 2000 & 20000 & 100000 & 300000 &   \\
\hline
& \multicolumn{5}{c}{$\frac{1}{10}\sum_{k=1}^{10} |Y^{\Delta_t}_{0,k}-Y_0|$}& CR \\
\hline
$(\theta_1=1, \theta_2=1, \theta_3=\frac{1}{2})$ & 0.0329 &0.0194  &  0.0080  & 0.0045  &  0.0012 & 1.17  \\
standard deviation & 0.0276  &0.0224   &  0.0051 & 0.0020 &   9.8927e-04  \\
\hline
$(\theta_1=\frac{1}{2}, \theta_2=1, \theta_3=\frac{1}{2})$ & 0.0262 &0.0174  & 0.0056 & 0.0027  &  7.9174e-04 & 1.28  \\
standard deviation & 0.0279&0.0226 &  0.0052 & 0.0020  &  9.9436e-04   \\
\hline
\hline
& \multicolumn{5}{c}{$\frac{1}{10}\sum_{k=1}^{10} |Z^{\Delta_t}_{0,k}-Z_0|$}& CR \\
\hline
$(\theta_1=1, \theta_2=1, \theta_3=\frac{1}{2})$ & 0.1142 & 0.0752 & 0.0235 & 0.0157 & 0.0092  &0.95 \\
standard deviation & 0.0273 & 0.0271  & 0.0193  & 0.0078  &  0.0055  \\
\hline
$(\theta_1=\frac{1}{2}, \theta_2=1, \theta_3=\frac{1}{2})$ & 0.0516 &0.0448 &0.0149 &0.0091 &0.0066 & 0.82  \\
standard deviation & 0.0285 &  0.0265 &  0.0180 & 0.0086  &  0.0056 \\
\hline
\hline
& \multicolumn{6}{c}{average runtime in seconds}&\\
\hline
$(\theta_1=1, \theta_2=1, \theta_3=\frac{1}{2})$ &0.1 & 0.5 & 2.3 &23.9 & 147.0  \\
\hline
$(\theta_1=\frac{1}{2}, \theta_2=1, \theta_3=\frac{1}{2})$ & 0.1  &0.2 & 2.3 & 25.5 & 144.4   \\
\hline
\end{tabular}
\caption{Absolute errors, standard deviations, average runtimes in seconds and convergence rates (CR) for the Example of BSDE \eqref{eq:example02}.}\label{table:02}
\end{table}
\begin{figure}[htbp!]
 \centering
 \begin{subfigure}[b]{0.43\textwidth}
 \includegraphics[width=\textwidth]{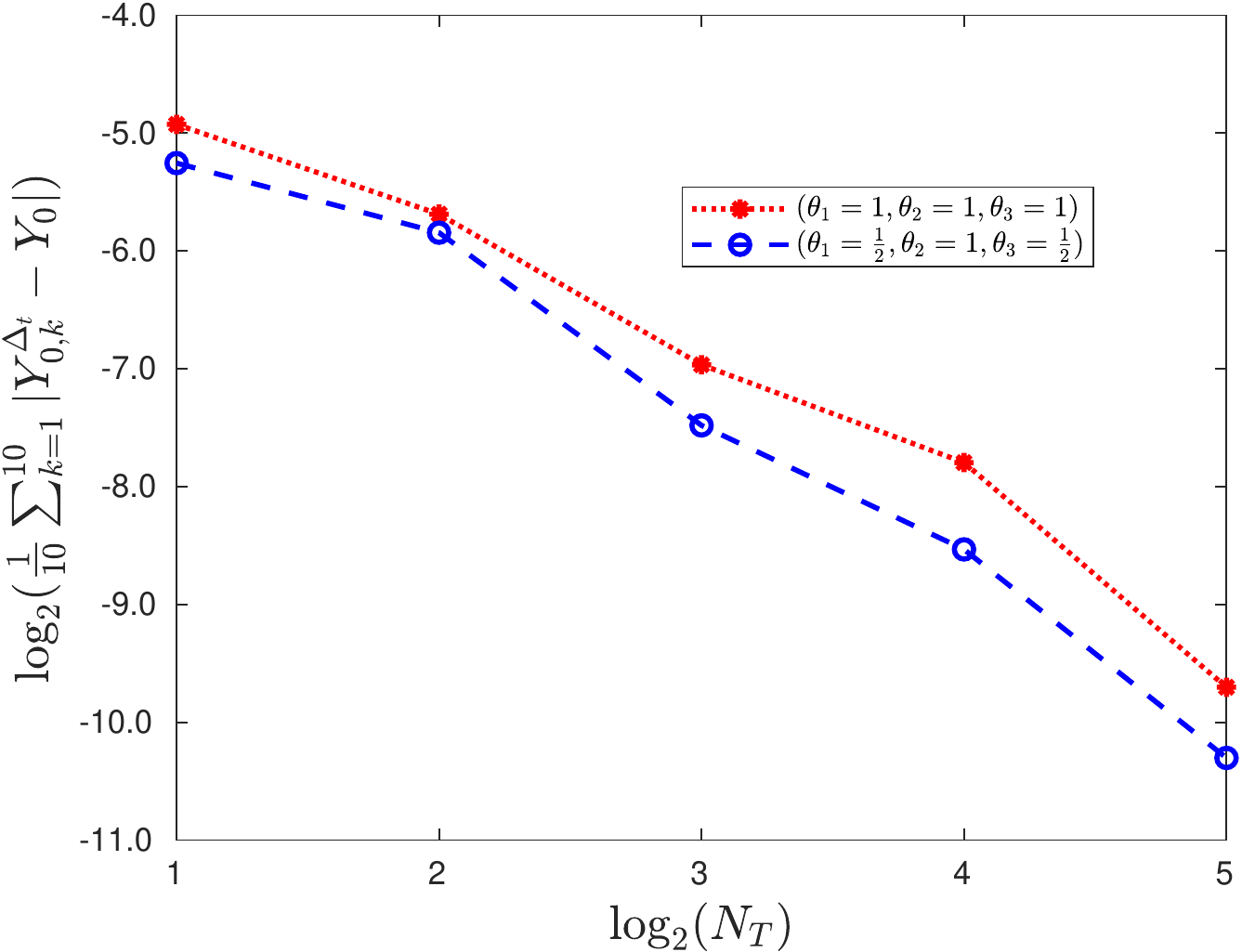}
 \subcaption{$Y$-component}
\end{subfigure}
 ~ 
 \begin{subfigure}[b]{0.43\textwidth}
 \includegraphics[width=\textwidth]{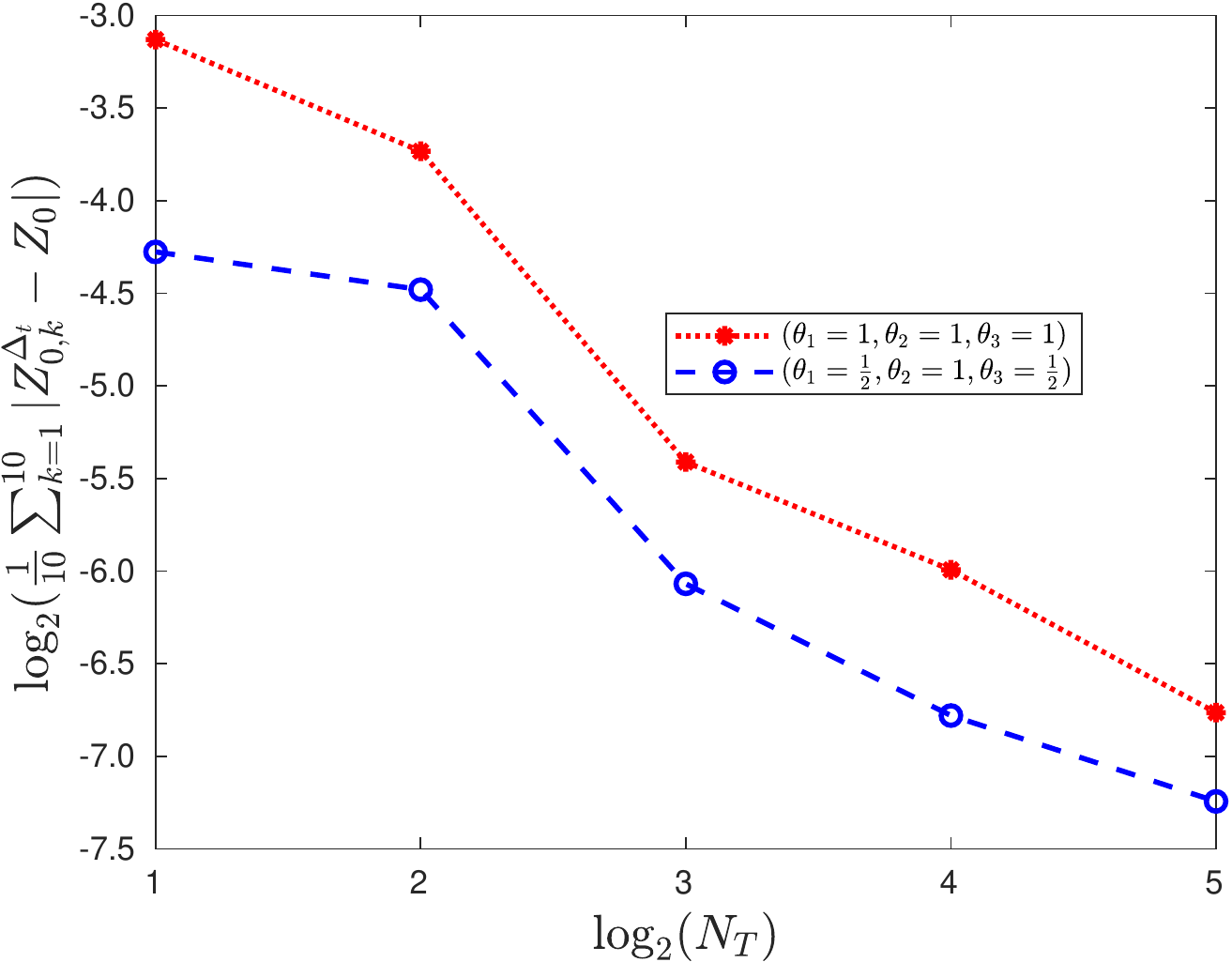}
 \subcaption{$Z$-component}
\end{subfigure}
 \caption{The plots of average of the absolute values with respect to $\log_2(N_T).$}\label{fig:02}
\end{figure}
The results shown in Table \ref{table:02} and Figure \ref{fig:01} are consistent with the conclusions in Theorem \ref{theo:convergence}.
Actually, when we use the absolute value $|Y_0-Y^{\Delta_t}_0|$
and $|Z_0-Z^{\Delta_t}_0|,$ where $Y^{\Delta_t}_0=\frac{1}{10}\sum_{k=1}^{10}Y^{\Delta_t}_{0,k}$ and $Z^{\Delta_t}_0=\frac{1}{10}\sum_{k=1}^{10}Z^{\Delta_t}_{0,k}.$
the obtained numerical convergence rates in Table \ref{table:02} are higher.
\subsection{Example of FBSDE}\label{sec:bs}
In the remaining examples we always use the scheme for $(\theta_1=\frac{1}{2}, \theta_2=1, \theta_3=\frac{1}{2}),$ unless otherwise specified.
For the example of FBSDE we compute the price of a European call option $V(t, S_t)$ via a FBSDE where the underlying
asset as the forward process, which follows a geometric Brownian motion given by
\begin{equation}
 dS_t = \mu S_t\,dt + \sigma S_t dW_t.
\end{equation}
It is well-known that the exact solution is analytically given in \cite{black1973}, namely Black-Scholes price.
We assume that the asset pays dividends with the rate $d.$ 
As introduced in \cite{Karoui1997b}, the corresponding FBSDE for the price of option can be derived by setting
up a self-financing portfolio $Y_t,$ which consists of $\pi_t$ assets and $Y_t-\pi_t$ bonds with risk-free return rate $r,$
which reads
\begin{equation}\label{eq:bsfbsde}
 \left\{
 \begin{array}{l}
\quad dS_t = \mu S_t\,dt + \sigma S_t\,dW_t,\\
-dY_t = \left(-r Y_t - \frac{\mu-r+d}{\sigma} Z_t\right)\,dt - Z_t\,dW_t,\\
\quad Y_T=\xi=\max(S_T-K, 0).
 \end{array}\right.
 \end{equation}
$Y_t$ corresponds to the option value $V(t,S_t)$, $Z_t$ is related to the hedging strategy, $Z_t=\sigma S_t \pi_t=\sigma S_t\frac{\partial V}{\partial S}.$\\
\forceindent
For $S^{\Delta_t},$ we simulate the forward process $dS_t$ by using Euler-Method, although its analytical solution is available.
Note that, although the function $g(x)=\max(x,0)$ is not differentiable in this example, we still use it to generate samples for $(Y_T, Z_T)$ in our tree-based approaches:
\begin{equation}\label{eq:algo1}
 \left\{
  \begin{array}{lcr}
 Y^{\Delta_t}_{N_T,\mathcal{M}}=\max(S^{\Delta_t}_{N_T,\mathcal{M}}-K,0),\\
 Z^{\Delta_t}_{N_T,\mathcal{M}}=\left\{\begin{array}{lcr}
                        \sigma S^{\Delta_t}_{N_T,\mathcal{M}} \quad \mbox{when}~ S^{\Delta_t}_{N_T,\mathcal{M}} >K\\
                         0, \quad \mbox{otherwise}
                         \end{array}\right.\\
 \end{array}\right.
   \end{equation} 
where $\mathcal{M}=1,\cdots,M.$
For the comparison purpose, we take the parameter values, which are used in \cite{Zhao2006}
\begin{equation}
 K=S_0=100,\,r=0.03,\,\mu=0.05,\,d=0.04,\,\sigma=0.2,\,T=0.33
\end{equation}
with the exact solution $(Y_0,Z_0)=(4.3671, 10.0950).$ For the following financial applications we 
consider the relative error $\frac{1}{10}\sum_{k=1}^{10}\frac{|Y^{\Delta_t}_{0,k}-Y_0|}{|Y_0|},$
see Table \ref{table:03}.
\begin{table}
\centering 
\begin{tabular}{c*{8}{c}r}
\hline
$N_T$        & 2 & 4 & 8 &12 & 16 & 20 &   \\
\hline
$M$          & 2000 & 10000 &30000 & 60000& 100000 & 250000 &   \\
\hline
& \multicolumn{6}{c}{$\frac{1}{10}\sum_{k=1}^{10}\frac{|Y^{\Delta_t}_{0,k}-Y_0|}{|Y_0|}$}& CR \\
\hline
$(\theta_1=\frac{1}{2}, \theta_2=1, \theta_3=\frac{1}{2})$ & 0.0230 &0.0091 &0.0052  & 0.0038 & 0.0027  &  0.0011 & 1.15  \\
standard deviation & 0.1311&0.0501 &  0.0279 & 0.0255  &  0.0143 &0.0055   \\
\hline
\hline
& \multicolumn{6}{c}{$\frac{1}{10}\sum_{k=1}^{10}\frac{|Z^{\Delta_t}_{0,k}-Z_0|}{|Z_0|}$}& CR \\
\hline
$(\theta_1=\frac{1}{2}, \theta_2=1, \theta_3=\frac{1}{2})$ &0.0492& 0.0246 &0.0151 &0.0113 &0.0091 &0.0056 & 0.86  \\
standard deviation & 0.6187 &  0.3161 &  0.1950 & 0.1307  &  0.1218 & 0.0708 \\
\hline
\hline
& \multicolumn{7}{c}{average runtime in seconds}&\\
\hline
 & 0.1  &0.5 & 3.1 & 9.5&21.3 & 66.7   \\
\hline
\end{tabular}
\caption{Relative errors, standard deviations, average runtimes in seconds and convergence rates for the Black-Scholes model.}\label{table:03}
\end{table}
Note that we have in this example the simulation error by using the Euler-Method for the forward process $dS_t.$ Furthermore, terminal condition for $Y_T$ is not differentiable at the
point $S_T=K,$ which leads to a jump for the component $Z_T$ at that points.
Although that, without any smoothing techniques we still obtain the satisfactory results using the tree-based approach.

\subsection{Example of two-dimensional FBSDE}
For the two-dimensional FBSDE we consider the Heston stochastic volatility model \cite{Heston1993} which reads
\begin{equation}\label{eq:originalHeston}
 \left\{
  \begin{array}{lcr}
 dS_t = \mu S_t\,dt + \sqrt{\nu_t} S_t\,dW^S_t,\\
 d\nu_t = \nkappa(\nmu-\nu_t)\,dt +\nsigma\sqrt{\nu_t}\,dW^{\nu}_t,\\
 dW^S_tdW^{\nu}_t=\rho\,dt,
  \end{array}
 \right.
 \end{equation}
 where $S_t$ is the spot price of the underlying asset, $\nu_t$ is the volatility. It is well-known that the Heston model \eqref{eq:originalHeston}
 can be reformulated as 
 \begin{equation}\label{eq:Heston01}
 d{\bf X}_t=  \left(\begin{array}{c}
                     d\nu_t\\
                     dS_t\\
                    \end{array}\right)= \left(\begin{array}{c}
                                          \nkappa(\nmu-\nu_t)\\
                                          \mu S_t\\
                                        \end{array}\right)\, dt + 
                                        \left(\begin{array}{c c}
                                          \nsigma\sqrt{\nu_t}&0\\
                                         S_t \rho \sqrt{\nu_t}&S_t \sqrt{1-\rho^2} \sqrt{\nu_t}\\
                                        \end{array}\right)\left(\begin{array}{c}
                     d\tilde{W}_t^{\nu}\\
                     d\tilde{W}_t^{S}\\
                    \end{array}\right),\end{equation}
 where $\tilde{W}_t^{\nu}~\mbox{and}~\tilde{W}_t^{S}$ are independent Brownian motions.
 To find the FBSDE form for the Heston Model we consider the following self-financing strategy
 \begin{align}
  dY_t &=  a_t\,dU(t,\nu_t,S_t) + b_t dS_t + c_t dP_t,\label{eq:portfolio1}\\
       &= a_t\,dU(t,\nu_t,S_t) + b_t dS_t + \frac{(Y_t - a_tU(t,\nu_t,S_t)  - b_t S_t)}{P_t} dP_t,\label{eq:portfolio2}
 \end{align}
where $U(t,\nu_t,S_t)$ is the value of another option for hedging volatility,
$dP_t=r P_t\,dt$ is used for the risk-free asset,
$a_t, b_t~\mbox{and}~c_t$ are numbers of the option, underlying asset and risk-free asset, respectively. 
We assume that 
\begin{equation}
 dU(t,\nu_t,S_t)=\eta(t,\nu_t,S_t)dt,
\end{equation}
which can be substituted into \eqref{eq:portfolio2} to obtain
 \begin{equation}\label{eq:y1}
  -dY_t = \left(a_t r U(t,\nu_t,S_t)-a_t \eta(t,\nu_t,S_t)-\frac{(\mu-r)}{\sqrt{1-\rho^2}\sqrt{\nu_t}}Z_{t,2} -r Y_t\right)\,dt-{\bf Z}_t\left(\begin{array}{c}
                     d\tilde{W}_t^{\nu}\\
                     d\tilde{W}_t^{S}\\
                    \end{array}\right)
 \end{equation}
with
\begin{equation}\label{eq:z1}
 {\bf Z}_t=(Z^1_{t},Z^2_{t})=\left(a_t\nsigma\sqrt{\nu_t}+b_t S_t \rho \sqrt{\nu_t}, b_t S_t \sqrt{1-\rho^2} \sqrt{\nu_t}\right).
\end{equation}
In the Heston model \cite{Heston1993}, the market price of the volatility risk is assumed to $\lambda \nu_t.$
With the notations used in \eqref{eq:Heston01}, the Heston pricing PDE including $\lambda$ reads
\begin{equation}\label{eq:heston2}
\frac{\partial V}{\partial t}+ rS\frac{\partial V}{\partial S } + \left(\nkappa(\nmu-\nu)-\lambda\nu\right)\frac{\partial V}{\partial \nu}
+\frac{1}{2}\nu S^2\frac{\partial^2 V}{\partial S^2}+\rho\nsigma\nu S\frac{\partial^2 V}{\partial S \partial \nu} +\frac{1}{2}\nsigma^2\nu\frac{\partial^2 V}{\partial\nu^2} -r V=0.
\end{equation}
The solution of the FBSDE \eqref{eq:y1} is exactly the solution of the Heston PDE \eqref{eq:heston2} by choosing
$r U(t,\nu_t,S_t)-\eta(t,\nu_t,S_t)\equiv-\lambda\nu_t.$
The equations \eqref{eq:y1} and \eqref{eq:z1} can thus be reformulated as
 \begin{align}\label{eq:y2}
  -dY_t &= \left(-a_t\lambda\nu_t-\frac{(\mu-r)}{\sqrt{1-\rho^2}\sqrt{\nu_t}}Z_{t}^2 -r Y_t\right)\,dt-{\bf Z}_t\left(\begin{array}{c}
                     d\tilde{W}_t^{\nu}\\
                     d\tilde{W}_t^{S}\\
                    \end{array}\right)\\
                    &=\left(-\frac{\lambda\sqrt{\nu_t}}{\nsigma}Z_{t}^1+\left(\frac{\rho\lambda\sqrt{\nu_t}}{\sqrt{1-\rho^2}\nsigma}-\frac{(\mu-r)}{\sqrt{1-\rho^2}\sqrt{\nu_t}}\right)Z_{t}^2 -r Y_t\right)\,dt-{\bf Z}_t\left(\begin{array}{c}
                     d\tilde{W}_t^{\nu}\\
                     d\tilde{W}_t^{S}\\
                    \end{array}\right)
 \end{align}
with ${\bf Z}_t$ defined in \eqref{eq:z1}.
Note that the generator in this example can be not Lipschitz continuous.
The European-style option can be replicated by hedging this portfolio. We consider e.g., a call option whose value at time $t$ is same to the portfolio value $Y_t,$
and $Y_T=\xi=\max(S_T-K, 0).$ Hence, $Y_t$ is the Heston option value $V(t,\nu_t,S_t)$, ${\bf Z}_t$ presents the hedging strategies, where 
$Z_{t}^1=\frac{\partial V}{\partial \nu}\nsigma\sqrt{\nu_t}+\frac{\partial V}{\partial S} S_t \rho \sqrt{\nu_t}$ and $Z_{t}^2=\frac{\partial V}{\partial S} S_t \sqrt{1-\rho^2} \sqrt{\nu_t}.$
The semi-analytical solution of the Heston model is available, the corresponding Delta hedging $\frac{\partial V}{\partial S}$ can thus be obtained also in a closed form.
However, the Vega hedging against volatility risk is defined as the derivative of option value with respect to the volatility $\nu_t,$ which is driven by the Cox-Ingersoll-Ross process in the Heston model
and thus not analytically available.
For this reason we can only consider the approximation of $Y$-component, namely the option price in the Heston model.
The parameter values used for this numerical test are
\begin{equation*}
\begin{split}
 &K=S_0=50,\,r=0.03,\,\mu=0.05,\,\lambda=0,\,T=0.5,\\
 &\nu_0=\nmu=0.04,\,\nkappa=1.9,\,\nsigma=0.1,\,\rho=-0.7,
\end{split}
\end{equation*}
which give the exact solution $Y_0=3.1825.$
The forward processes $dS_t~\mbox{and}~d\nu_t$ are simulated using the Euler-method, for the final values at the maturity $T$ we take
\begin{equation}\label{eq:algo2}
 \left\{
  \begin{array}{lcr}
 Y^{\Delta_t}_{N_T,\mathcal{M}}=\max(S^{\Delta_t}_{N_T,\mathcal{M}}-K,0),\\
 Z^{1,\Delta_t}_{N_T,\mathcal{M}}=\left\{\begin{array}{lcr}
                        S^{\Delta_t}_{N_T,\mathcal{M}}\rho\sqrt{\nu^{\Delta_t}_{N_T,\mathcal{M}}} \quad \mbox{when}~ S^{\Delta_t}_{N_T,\mathcal{M}} >K\\
                         0, \quad \mbox{otherwise}
                         \end{array}\right.\\
 Z^{2,\Delta_t}_{N_T,\mathcal{M}}=\left\{\begin{array}{lcr}
                         S^{\Delta_t}_{N_T,\mathcal{M}}\sqrt{1-\rho^2}\sqrt{\nu^{\Delta_t}_{N_T,\mathcal{M}}} \quad \mbox{when}~ S^{\Delta_t}_{N_T,\mathcal{M}} >K\\
                         0, \quad \mbox{otherwise}
                        \end{array} \right.\\
\end{array}\right.
   \end{equation} 
where $\mathcal{M}=1,\cdots,M.$ 
The corresponding relative errors are reported in Table \ref{table:04}.
\begin{table}
\centering 
\begin{tabular}{c*{7}{c}r}

\hline
\multicolumn{7}{c}{Exact price: 3.1825}\\
\hline
\hline
I & \multicolumn{5}{c}{The tree-based approach $(\theta_1=\frac{1}{2}, \theta_2=1, \theta_3=\frac{1}{2})$ }&  \\
\hline
$N_T$ & 2 & 4 & 8 & 16 & 32 &   \\
\hline
$M$ & 5000 & 10000 & 40000 & 100000 & 300000 &   \\
\hline
$\frac{1}{10}\sum_{k=1}^{10}\frac{|Y^{\Delta_t}_{0,k}-Y_0|}{|Y_0|}$& 0.0207 &0.0115  & 0.0043 & 0.0028  &  0.0015 & CR $\approx $0.96  \\
standard deviation & 0.0840&0.0307 &  0.0173 & 0.0109  &  0.0051   \\
 average runtime & 0.2  &0.9 & 7.4 & 38.8 & 241.3   \\
\hline
\hline
II & \multicolumn{5}{c}{The CS Crank-Nicolson ADI scheme}&  \\
\hline
$N_T$ & 2 & 4 & 8 & 16 & 32 &   \\
\hline
$\frac{|Y^{\Delta_t}-Y_0|}{|Y_0|}$& 0.0900 &0.0103  & 0.0068 & 0.0062  &  \underline{{\bf 0.0061}} &  \\
 runtime & 0.2  & 0.7 & 1.2 & 2.7 & \underline{{\bf 6.2}}   \\
\hline
\hline
III & \multicolumn{5}{c}{The tree-based approach $(\theta_1=\frac{1}{2}, \theta_2=1, \theta_3=\frac{1}{2})$ }&  \\
\hline
$N_T$ & 8 & 8 & 8 & 8 & 8 &   \\
\hline
$M$ & 100 & 500 & 1000 & 5000 & 10000 &   \\
\hline
$\frac{1}{10}\sum_{k=1}^{10}\frac{|Y^{\Delta_t}_{0,k}-Y_0|}{|Y_0|}$& 0.1181 &0.0612  & 0.0278 & 0.0162 & \underline{{\bf 0.0051}}  &  \\
standard deviation & 0.4637&0.2137 &  0.1171 & 0.0561 & \underline{{\bf 0.0183}}&  \\
 average runtime & 0.1  &0.2 & 0.2 & 1.0 & \underline{{\bf 1.9}} &    \\
\hline
\end{tabular}
\caption{Relative errors, standard deviations, average runtimes in seconds and convergence rates for the Heston model. Part I: the tree-based approach is used for different values of $N_T$ and $M;$ Part II: the CS Crank-Nicoln ADI finite different scheme is used; Part III: the tree-based approach is used for $N_T=8$ and different values of $M.$}\label{table:04}
\end{table}
We obtain quite accurate approximation for the Heston option price by solving the two-dimensional FBSDE, although the generator is not Lipschitz continuous. It is well-known that a splitting scheme of the Alternating Direction Implicit (ADI) type has been widely analyzed and applied to efficiently find the numerical solution of a two-dimensional parabolic partial differential equation (PDE). We thus compare our tree-based approach to the Craig-Sneyd (CS) Crank-Nicolson ADI finite difference scheme \cite{Craig1988} for solving the Heston model in Table \ref{table:04}.
We denote $N_S$ and $N_{\nu}$ as number of points for the stock price and the volatility grid, respectively. The ADI scheme is performed in domain $[0, 2K]$ for $S$ and $[0, 0.5]$ for $\nu$ with a uniform grid  $N_S=N_{\nu}=40,$ the time steps $N_T$ are given in Table \ref{table:04}. One can observe that the tree-based approach gives a better at least compatible result.
\subsection{Example of high-dimensional FBSDE}
It is interesting for us to test performance of tree-based approach in solving high-dimensional FBSDE. 
For this we consider the pricing problem of Rainbow option \cite{Stulz1982, Johnson1987}.
We suppose that $D$ stocks, which are for simplicity assumed to be independent and identically distributed, and driven
by 
\begin{equation}
 dS_{t,d} = \mu S_{t,d}\,dt + \sigma S_{t,d} dW_{t,d},\quad d=1,\cdots D,
\end{equation}
where $\sigma>0$ and $\mu \in \mathbb{R}.$ For the terminal condition we take that of a Call on max
\begin{equation}\label{eq:terCon}
 Y_T=\xi=\max\left(\max_{d=1,\cdots, D}(S_{T,d})-K, 0\right).
\end{equation}
The driver $f$ is then defined by
\begin{equation}\label{eq:linGenerator}
 f(t, x, y, z)=-r y - \frac{\mu-r}{\sigma} \sum_{d=1}^D z_d.
\end{equation}
In this linear example we take
\begin{equation*}
 K=S_0=100,\,r=0.04,\,\mu=0.06,\,T=0.1.
\end{equation*}
To the best of our knowledge, there is no method available for pricng the high-dimensional Rainbow option, which could allow for a less computational time than direct Monte-Carlo simulation. However, our aim is to show performance of the tree-based approach for pricing a high-dimensional Rainbow option based on the BSDE. Therefore, we compare our approach to the multilevel Monte Carlo method based on Picard approximation proposed in \cite{E2019}. The reference prices are computed with $7$ Picard iterations.

\forceindent
We consider the $10$-dimensional pricing problem, i.e., $D=10.$ 
Firstly, in Table \ref{table:05} (Part I), we adjust roughly sample sizes M to approximate the convergence rate with respect to the time step sizes. 
All the relative errors, empirical standard deviation and
convergence rate are reported there.
\begin{table}[h!]
\centering 
\begin{tabular}{c*{8}{c}r}
\hline
Reference price&\multicolumn{7}{c}{ $Y_0=10.4689$ (average runtime 2249.6 seconds) } \\
\hline
\hline
I&\multicolumn{7}{c}{The tree-based approach $(\theta_1=\frac{1}{2}, \theta_2=1, \theta_3=\frac{1}{2})$} \\
\hline
$N_T$ & 2 & 4 & 8 & 12 & 16 & 20 &   \\
\hline
$M$ & 5000 & 10000 & 80000 & 100000 & 200000 & 400000&  \\
\hline
$\frac{1}{10}\sum_{k=1}^{10}\frac{|Y^{\Delta_t}_{0,k}-Y_0|}{|Y_0|}$ & 0.0390 &0.0195 & 0.0078 & 0.0038  &  0.0013 &3.4737e-04 & CR $\approx$1.9  \\
standard deviation & 0.0429&0.0356 &  0.0109 & 0.0080  &  0.0045 & 0.0025  \\
average runtime in seconds& 0.9  &4.3 & 75.7 & 146.6 & 602.9 & 999.2  \\
\hline
\hline
II&\multicolumn{7}{c}{The tree-based approach $(\theta_1=\frac{1}{2}, \theta_2=1, \theta_3=\frac{1}{2})$} \\
\hline
$N_T$ & 12 & 12 & 12 & 12 & 12 & 12 &   \\
\hline
$M$ & 100 & 500 & 1000 & 2000 & 5000 & 10000&  \\
\hline
$\frac{1}{10}\sum_{k=1}^{10}\frac{|Y^{\Delta_t}_{0,k}-Y_0|}{|Y_0|}$ & 0.0309 &0.0154 & 0.0100 &  \underline{{\bf 0.0061 }}  &  0.0059 &0.0056 &   \\
standard deviation & 0.4048&0.1721 &  0.1326 &  \underline{{\bf 0.0762 }}  &  0.0552 & 0.0303  \\
average runtime in seconds& 0.5  &0.9 & 1.6 &  \underline{{\bf 3.1 }} & 7.6 & 15.0  \\
\hline
\hline
III&\multicolumn{7}{c}{The multilevel Monte Carlo method \cite{E2019}} \\
\hline
Number of the Picard iteration & 1 & 2 & 3 & 4 & 5 & 6 &   \\
\hline
$\frac{1}{10}\sum_{k=1}^{10}\frac{|Y^{\Delta_t}_{0,k}-Y_0|}{|Y_0|}$ & 0.1920 &0.2312 & 0.0759 & 0.02290  &  0.0120 & \underline{{\bf 0.0058}}& \\
\hline
standard deviation &1.7304&2.8257 &  0.9796 & 0.2691  &  0.1695 & \underline{{\bf 0.0825}}  \\
\hline
average runtime in seconds& 0.0  & 0.0 & 0.0 & 0.3 & 3.4 & \underline{{\bf 43.9}}  \\
\hline

\end{tabular}
\caption{Relative errors, standard deviations, average runtimes in seconds and convergence rate for the max option in the case $D=10.$ Part I: the tree-based approach is used for different values of $N_T$ and $M;$ Part II: the tree-based approach is used for $N_T=12$ and different values of $M;$ Part III: the multilevel Monte Carlo method is used for different iteration numbers. }\label{table:05}
\end{table}
The reference price $Y_0=10.4689$ is computed by means of
the multilevel-Picard approximation method in \cite{E2019} with 7 Picard iterations, whereas the average runtime are 2249.6 seconds.
It is not difficult to see that our results are quite promising, and show that the $10$-dimensional problem
can be highly effective and accurate approximated using the tree-based approach. The obtained convergence rate of the proposed scheme is 1.9. 
For a comparison purpose, using the same reference price we report the errors, standard deviations and average rumtimes for the Picard iteration number $\{1,\cdots,6\}$
using the method in \cite{E2019} in Table \ref{table:05} (Part III).
To compare the result for the Picard iteration number equals $6$ (bold and underlined), in Table \ref{table:05} (Part II) we show our results for $N_T=12$ by varying different sample sizes.
From our result for $N_T=12~\mbox{and}~M=2000$ (bold and underlined) we see that for this $10$-dimensional pricing problem, our scheme is more than 10 times faster than the approximation method in \cite{E2019}. 
Note that, in order to see performance of our approach for the problem in which the forward SDE does not exhibit an analytical solution, we simply use the Euler method for $dS.$

\forceindent
Finally, we test our scheme for the $100$-dimensional pricing problem. 
\begin{table}[h!]
\centering 
\begin{tabular}{c*{8}{c}r}
\hline
reference price&\multicolumn{7}{c}{ $Y_0=17.4267$ (average runtime 2613.9 seconds) } \\
\hline
\hline
I&\multicolumn{7}{c}{The tree-based approach $(\theta_1=\frac{1}{2}, \theta_2=1, \theta_3=\frac{1}{2})$} \\
\hline
$N_T$ & 2 & 4 & 8 & 12 & 16 & 20 &   \\
\hline
$M$ & 5000 & 10000 & 80000 & 100000 & 200000 & 300000  \\
\hline
$\frac{1}{10}\sum_{k=1}^{10}\frac{|Y^{\Delta_t}_{0,k}-Y_0|}{|Y_0|}$ & 0.1920 &0.0943  & 0.0466 & 0.0297  &0.0206 & 0.0152 &CR$\approx$1.09  \\
standard deviation & 0.0771&0.0353 &  0.0180 & 0.0111  &  0.0104 &0.0082   \\
\hline
 average runtime in seconds & 16.2  & 90.1 & 1621 & 3162 & 8529 & 16180   \\
\hline
\hline
II&\multicolumn{7}{c}{The tree-based approach $(\theta_1=\frac{1}{2}, \theta_2=1, \theta_3=\frac{1}{2})$} \\
\hline
$N_T$ & 20 & 20 & 20 & 20 & 20 & 20 &   \\
\hline
$M$ & 100 & 500 & 1000 & 2000 & 5000 & 10000&  \\
\hline
$\frac{1}{10}\sum_{k=1}^{10}\frac{|Y^{\Delta_t}_{0,k}-Y_0|}{|Y_0|}$ & 0.0240 & \underline{{\bf 0.0140}} & 0.0159  & 0.0110  & 0.0143 &0.0137 &   \\
standard deviation & 0.4819& \underline{{\bf 0.2275}} & 0.1858  & 0.0962  & 0.0761 & 0.0704  \\
average runtime in seconds& 8.6  & \underline{{\bf 27.6}}& 56.1 & 111.3 & 270.2&  541.9 \\
\hline
\hline
III&\multicolumn{7}{c}{The multilevel Monte Carlo method \cite{E2019}} \\
\hline
Number of the Picard iteration & 1 & 2 & 3 & 4 & 5 & 6 &   \\
\hline
$\frac{1}{10}\sum_{k=1}^{10}\frac{|Y^{\Delta_t}_{0,k}-Y_0|}{|Y_0|}$ & 0.1970 &0.1368 & 0.0606 & 0.0546  &  0.0249 &\underline{{\bf 0.0165}}& \\
\hline
standard deviation &4.2130&3.1551 &  1.4108 & 1.2591  &  0.4458 & \underline{{\bf 0.3539}}  \\
\hline
average runtime in seconds& 0.0  & 0.0 & 0.0 & 0.4 & 4.0 & \underline{{\bf 50.1}}  \\
\hline
\end{tabular}
\caption{Relative errors, standard deviations, average runtimes in seconds and convergence rate for the max option in the case $D=100.$
Part I: the tree-based approach is used for different values of $N_T$ and $M;$ Part II: the tree-based approach is used for $N_T=20$ and different values of $M;$ Part III: the multilevel Monte Carlo method is used for different iteration numbers.}\label{table:06}
\end{table}
Note that due to the limitation of memory, we only set $M=300000$ for $N_T=20$ in the $100$-dimensional case.
In Table \ref{table:06}, the average runtime of using the multilevel-Picard method for $100$-dimension (2613.9) is not much longer than
that (2249.6) in Table \ref{table:05} for $10$-dimension. Especially, by comparing the average runtime in Table 5 in Section 4.3 in \cite{E2019} for $1$-dimensional to that in Table 6 in the same section in \cite{E2019}, it seems that the multilevel-Picard method in \cite{E2019}
is not sufficiently efficient for a lower dimensional problem. In contrast,
in the previous numerical experiments ($10$-dimensional problem) we have seen that our proposed approach is much more efficient.
Although the computational expense in our proposed approach increases for the increasing dimensionality,
for this $100$-dimensional pricing problem our approach is still two time faster than the method proposed in \cite{E2019} for the same or better error level,
see both the results which are bold and underlined in Table \ref{table:06}.
The proposed scheme converges with the rate of 1.09 for the $100$-dimensional pricing problem.

\subsection{Example of nonlinear FBSDE}
In this section we test our scheme for nonlinear high-dimensional problems. 
We find that nonlinear training data may lead to overfitting when directly using the above introduced procedure.
Therefore, to avoid the overfitting for the nonlinear problems, we propose to control the error already while growing a tree.
For this, we estimate the cross validation mean squared errors of the trees, which are constructed with different number of observations in each branch node.
Clearly, the best tree, namely the best number of observations in each branch node can be determined by comparing the errors.
Theoretically, for the best result, the error control needs to be performed for each time step. However, it will be computationally too expensive.
Fortunately, in our test we find the best numbers of observations for each time step are very close to each other. 
For substantially less computation time, one only needs determine one of them, e.g.,, for the first iteration, and fix it for all other iterations.
We note that, the pruning procedure cannot bring considerable improvement when the tree has been grown using the best number of observations, is thus
unnecessary in this case.

As an example, we consider a pricing problem of an European option in a financial market with different interest rate
for borrowing and lending to hedge the European option. This pricing problem is analyzed in \cite{Bergman2008} and is used as a standard nonlinear (high-dimensional)
example in the many works, see e.g., \cite{E2017, E2019, Gobet2005, Bender2017}. Similar but different to \eqref{eq:terCon} and \eqref{eq:linGenerator},
the terminal condition and generator for the option pricing with different interest rate read as
\begin{equation}\label{eq:terCon_non_lin}
 Y_T=\xi=\max\left(\max_{d=1,\cdots, D}(S_{T,d})-K_1, 0\right) - 2 \max\left(\max_{d=1,\cdots, D}(S_{T,d})-K_2, 0\right)
\end{equation}
and
\begin{equation}\label{eq:nonLinGenerator}
 f(t, x, y, z)=-R^l y - \frac{\mu-R^l}{\sigma} \sum_{d=1}^D z_d + (R^b-R^l)\max\left(0, \frac{1}{\sigma}\sum_{d=1}^D z_d - y\right),
\end{equation}
respectively,
where $R^b, R^l$ are different interest rates and $K_1, K_2$ are strikes. Obviously, \eqref{eq:terCon_non_lin} and \eqref{eq:nonLinGenerator} are both nonlinear.

We first consider a 1-dimensional case, in which we use $Y_T=\xi=\max\left(S_{T}-100, 0\right)$ instead of \eqref{eq:terCon_non_lin} to agree with the setting in \cite{Gobet2005, E2017}.
The parameter values are set as: $T=0.5,$ $\mu=0.06,$ $\sigma=0.02,$ $R^l=0.04,$ $R^b=0.06.$ We use $Y_0=7.156$ computed
using the finite difference method as the reference price.
Note that the reference price is confirmed in \cite{Gobet2005} as well.
Firstly, we fix $M=200000,$ $G=50000$ and plot the relative error against the number of steps in Figure \ref{fig:nonlinear1d}.
\begin{figure}[htbp!]
	\centering
		\includegraphics[width=3in]{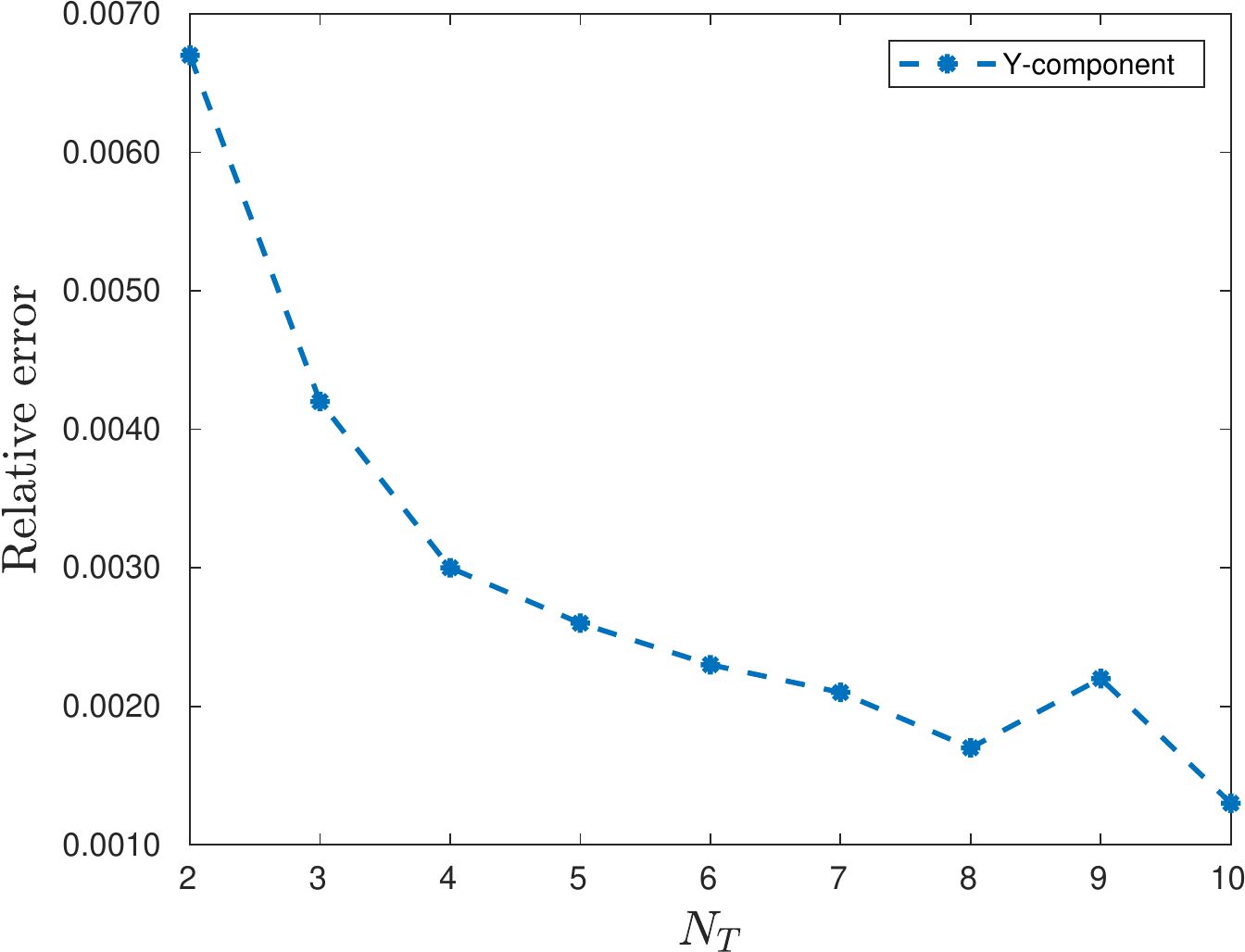}
	\caption{Comparison of relative errors against the number steps $N_T$ and $M=200000$ for one-dimensional pricing with different interest rate.}\label{fig:nonlinear1d}
\end{figure}
We obtain very good numerical results, and reach an error of order $10^{-3}.$
In Table \ref{table:07} we compare our results to the results given in Table 5 in \cite{E2019}, and show that the tree-based approach with $N_T=10$ can reach accuracy level of the multilevel Monte Carlo with 7 Picard iterations for
significantly less computational time.
\begin{table}[h!]
\centering 
\begin{tabular}{c*{8}{c}r}
\hline
Reference price&\multicolumn{7}{c}{ $Y_0=7.156$} \\
\hline
\hline
I&\multicolumn{7}{c}{The tree-based approach $(\theta_1=\frac{1}{2}, \theta_2=1, \theta_3=\frac{1}{2})$} \\
\hline
$N_T$ & 10 & 10 & 10 & 10 & 10 &10 & 10  \\
\hline
$M$ & 2000 &4000 &10000 & 20000  & 50000 & 100000& 200000 \\
\hline
$\frac{1}{10}\sum_{k=1}^{10}\frac{|Y^{\Delta_t}_{0,k}-Y_0|}{|Y_0|}$ & 0.0239&0.0152 &0.0107 & 0.0073 &  0.0043 &  0.0035& \underline{{\bf 0.0013}}\\
standard deviation & 0.2089&0.1100 &0.0953 &  0.0686   &  0.0364 & 0.0352& \underline{{\bf 0.0130}}   \\
average runtime in seconds& 0.1&0.1  &0.2 & 0.3  & 0.9 & 1.8 &\underline{{\bf3.7}} \\
\hline
\hline
II&\multicolumn{7}{c}{The multilevel Monte Carlo method, see Table 5 in \cite{E2019}} \\
\hline
Number of the Picard iteration & 1 & 2 & 3 & 4 & 5 & 6 & 7  \\
\hline
$\frac{1}{10}\sum_{k=1}^{10}\frac{|Y^{\Delta_t}_{0,k}-Y_0|}{|Y_0|}$ & 0.8285 	 &0.4417 & 0.1777 & 0.1047  &  0.0170 &  0.0086& \underline{{\bf 0.0019}}\\
\hline
standard deviation &7.7805&4.0799 & 1.6120 & 0.8106  &  0.1512 & 0.0714 & \underline{{\bf 0.0157}} \\
\hline
average runtime in seconds& 0.0  & 0.0 & 0.0 & 0.3 & 3.1 & 38.7&\underline{{\bf 1915.1}}  \\
\hline
\end{tabular}
\caption{Relative errors, standard deviations, average runtimes in seconds and convergence rate for the 1-dimensional pricing with different interest rates.
Part I: the tree-based approach is used for $N_T=10$ and different values of $M;$ Part II: the multilevel Monte Carlo method is used for different iteration numbers. }\label{table:07}
\end{table}
Note that the samples-splitting $(G=50000)$ is only used for $M=100000, 200000.$ 
Finally, we test our scheme for 100-dimensional nonlinear pricing problem.
In contrast to the case of 1-dimension, the terminal condition \eqref{eq:terCon_non_lin} is more challenge to deal with. In our test,
\eqref{eq:terCon_non_lin} can still be used to generate samples of $Y_{N_T}.$ However, for $Z_{N_T},$ the one-sided derivative of \eqref{eq:terCon_non_lin}, as it in
\eqref{eq:algo1} and \eqref{eq:algo2} is not sufficient for the 100-dimensional nonlinear pricing problem. Therefore, for this example we choose the scheme by setting $\theta_1=\theta_2=\theta_3=1$ such that
$Z$-component will be not directly needed for the iterations.
In Figure \ref{fig:nonlinear100d}, the results of using $M=200000,$ $G=50000$ against the number steps $N_T$ are reported.
\begin{figure}[htbp!]
	\centering
	\includegraphics[width=3in]{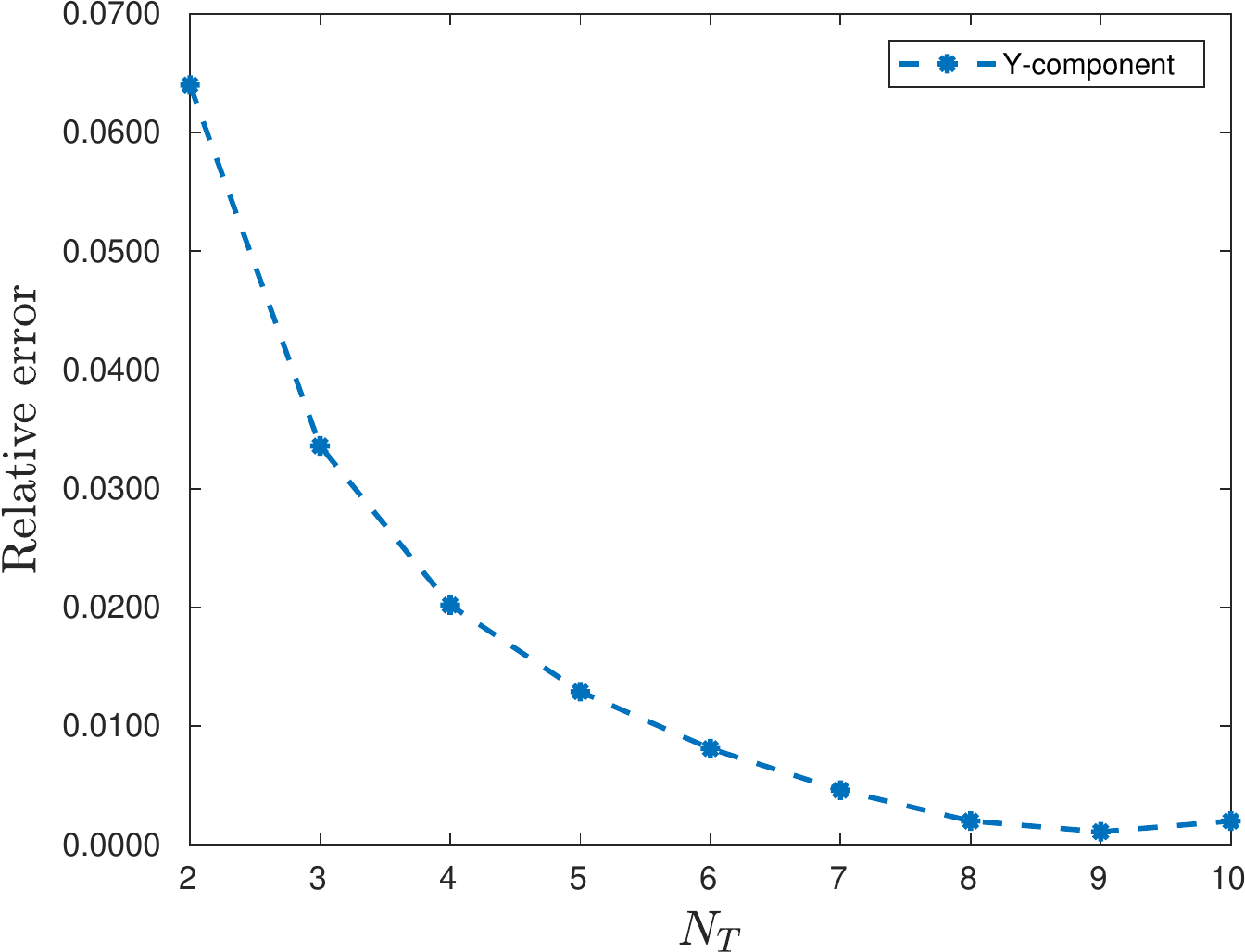}
	\caption{Comparison of relative errors against the number steps $N_T$ and $M=200000$ for 100-dimensional pricing with different interest rate. }\label{fig:nonlinear100d}
\end{figure}
Again, in Table \ref{table:08} we compare our results to them in Table 6 in \cite{E2019}. The reference price $Y_0=21.2988$ is computed using the multilevel Monte Carlo with 7 Picard iterations,
whereas $K_1=120, K_2=150,$ and values of other parameters are the same as those for the 1-dimensional case. We only use the samples-splitting $(G=50000)$ when $M > 50000.$ 
\begin{table}[h!]
	\centering 
	\begin{tabular}{c*{8}{c}r}
		\hline
		Reference price&\multicolumn{7}{c}{$Y_0=21.2988$(average runtime 2725.1 seconds)} \\
		\hline
		\hline
		I&\multicolumn{7}{c}{The tree-based approach $(\theta_1=1, \theta_2=1, \theta_3=1)$} \\
		\hline
		$N_T$ & 10 & 10 & 10 & 10 & 10 & 10 & \\
		\hline
		$M$ & 10000 & 50000 & 100000 & 200000 & 300000 & 400000& \\
		\hline
		$\frac{1}{10}\sum_{k=1}^{10}\frac{|Y^{\Delta_t}_{0,k}-Y_0|}{|Y_0|}$ &\underline{{\bf 0.0212}} &0.0022 & 0.0023 & 0.0020  &  0.0017 &  0.0022&\\
		standard deviation &\underline{{\bf 0.0629}}&0.0286 &  0.0243 & 0.0199  &  0.0143 & 0.0129&  \\
		average runtime in seconds&\underline{{\bf 19.6}}  &115.9 & 233.3 & 464.9 & 703.5 & 947.7& \\
		\hline
		\hline
		II&\multicolumn{7}{c}{The multilevel Monte Carlo method, see Table 6 in \cite{E2019}} \\
		\hline
		Number of the Picard iteration & 1 & 2 & 3 & 4 & 5 & 6 &  \\
		\hline
		$\frac{1}{10}\sum_{k=1}^{10}\frac{|Y^{\Delta_t}_{0,k}-Y_0|}{|Y_0|}$ & 0.4415 	 &0.4573 & 0.1798 & 0.1042  &  0.0509 &  \underline{{\bf 0.0474}}\\
		\hline
		standard deviation &8.7977&11.3167 & 4.4920 & 2.9533  &  1.4486 &\underline{{\bf 1.3757}} \\
		\hline
		average runtime in seconds& 0.0  & 0.0 & 0.0 & 0.4 & 4.2 & \underline{{\bf 52.9}}&  \\
		\hline
	\end{tabular}
	\caption{Relative errors, standard deviations, average runtimes in seconds and convergence rate for pricing with different interest rates in the case D=100.
	Part I: the tree-based approach is used for $N_T=10$ and different values of $M;$ Part II: the multilevel Monte Carlo method is used for different iteration numbers. }\label{table:08}
\end{table}
We see that our result with $N_T=10, M=2000$ is already better than the approximation of multilevel Monte-Carlo with 6 iterations for almost same computational time.
Furthermore, a better approximation (smaller standard deviations) can always be achieved with a larger number of $M.$
Note that the same reference price is used to compare the deep learning-based numerical methods for high-dimensional BSDEs in \cite{E2017} (Table 3),
which has achieved a relative error of $0.0039$ in a runtime of $566$ seconds.
\section{Conclusion}
In this work, we have studied solving forward-backward stochastic differential equations numerically using the regression tree-based methods.
We show how to use the regression tree to approximate the conditional expectations arising by discretizing the time-integrands using the general theta-discretization method.
We have performed several numerical experiments for different types of (F)BSDEs including its application to $100$-dimensional nonlinear pricing problem.
Our numerical results are quite promising and indicate that the tree-based approach is very attractive to solve high-dimensional nonlinear (F)BSDEs.

\bibliography{mybibfile}

\begin{thebibliography}{}

\bibitem[Bally, 1997]{Bally1997}
Bally, V. (1997).
\newblock Approximation scheme for solutions of bsde.
\newblock In Karoui, N.~E. and Mazliak, L., editors, {\em Backward stochastic
  differential equations}. Addison Wesley Longman, Harlow, UK.

\bibitem[Bender et~al., 2017]{Bender2017}
Bender, C., Schweizer, N., and Zhuo, J. (2017).
\newblock A primal-dual algorithm for bsdes.
\newblock {\em Math. Financ.}, 27(3):866--901.

\bibitem[Bender and Steiner, 2012]{Bender2012}
Bender, C. and Steiner, J. (2012).
\newblock Least-squares monte carlo for backward sdes.
\newblock {\em Numer. Methods Finance}, 12:257--289.

\bibitem[Bender and Zhang, 2008]{Bender2008}
Bender, C. and Zhang, J. (2008).
\newblock Time discretization and markovian iteration for coupled fbsdes.
\newblock {\em Ann. Appl. Probab.}, 18:143--177.

\bibitem[Bergman, 1995]{Bergman2008}
Bergman, Y.~Z. (1995).
\newblock Option pricing with differential interest rates.
\newblock {\em Rev. Financ. Stud.}, 8(2):475--500.

\bibitem[Black and Scholes, 1973]{black1973}
Black, F. and Scholes, M. (1973).
\newblock The pricing of options and corporate liabilities.
\newblock {\em J. Political Economy}, 81:637--654.

\bibitem[Bouchard and Touzi, 2004]{Bouchard2004}
Bouchard, B. and Touzi, N. (2004).
\newblock Discrete-time approximation and monte-carlo simulation of backward
  stochastic differential equations.
\newblock {\em Stoch. Proc. Appl.}, 111:175--206.

\bibitem[Breiman et~al., 1984]{Breiman1984}
Breiman, L., Friedman, J.~H., Olshen, R.~A., and Stone, C.~J. (1984).
\newblock {\em Classification and regression trees}.
\newblock Taylor \& Francis.

\bibitem[Craig and Sneyd, 1988]{Craig1988}
Craig, I. J.~D. and Sneyd, A.~D. (1988).
\newblock An alternating-direction implicit scheme for parabolic equations with
  mixed derivatives.
\newblock {\em Comput. Math. Appl.}, 16:341--350.

\bibitem[Crisan and Manolarakis, 2010]{Crisan2010}
Crisan, D. and Manolarakis, K. (2010).
\newblock Solving backward stochastic differential equations using the cubature
  method: Application to nonlinear pricing.
\newblock {\em SIAM J. Finan. Math.}, 3(1):534--571.

\bibitem[Douglas et~al., 1996]{Douglas1996}
Douglas, J., Ma, J., and Protter, P. (1996).
\newblock Numerical methods for forward-backward stochastic differential
  equations.
\newblock {\em Ann. Appl. Probab.}, 6:940--968.

\bibitem[E. et~al., 2017]{E2017}
E., W., Han, J., and Jentzen, A. (2017).
\newblock Deep learning-based numerical methods for high-dimensional parabolic
  partial differential equations and backward stochastic differential
  equations.
\newblock {\em Commun. Math. Stat.}, 5(4).

\bibitem[E. et~al., 2019]{E2019}
E., W., Hutzenthaler, M., Jentzen, A., and Kruse, T. (2019).
\newblock On multilevel picard numerical approximations for high-dimensional
  nonlinear parabolic partial differential equations and high-dimensional
  nonlinear backward stochastic differential equations.
\newblock {\em J. Sci. Comput.}

\bibitem[Fu et~al., 2017]{Fu2017}
Fu, Y., Zhao, W., and Zhou, T. (2017).
\newblock Efficient spectral sparse grid approximations for solving
  multi-dimensional forward backward sdes.
\newblock {\em Discrete Cont. Dyn-B.}, 22(9):3439--3458.

\bibitem[Gobet and Labart, 2010]{Gobet2010}
Gobet, E. and Labart, C. (2010).
\newblock Solving bsde with adaptive control variate.
\newblock {\em SIAM J. Numer. Anal.}, 48(1):257--277.

\bibitem[Gobet et~al., 2005]{Gobet2005}
Gobet, E., Lemor, J.~P., and Warin, X. (2005).
\newblock A regression-based monte carlo method to solve backward stochastic
  differential equations.
\newblock {\em Ann. Appl. Probab.}, 15:2172--2202.

\bibitem[Heston, 1993]{Heston1993}
Heston, S.~L. (1993).
\newblock A closed-form solution for options with stochastic volatility with
  applications to bond and currency options.
\newblock {\em Rev. Fin. Stud.}, 6(2):327--343.

\bibitem[Johnson, 1987]{Johnson1987}
Johnson, H. (1987).
\newblock Options on the maximum or the minimum of several assets.
\newblock {\em J. Financ. Quant. Anal.}, 22(3):277--283.

\bibitem[Karoui et~al., 1997a]{Karoui1997a}
Karoui, N.~E., Kapoudjan, C., Pardoux, E., Peng, S., and Quenez, M.~C. (1997a).
\newblock Reflected solutions of backward stochastic differential equations and
  related obstacle problems for pdes.
\newblock {\em Ann. Probab.}, 25:702--737.

\bibitem[Karoui et~al., 1997b]{Karoui1997b}
Karoui, N.~E., Peng, S., and Quenez, M.~C. (1997b).
\newblock Backward stochastic differential equations in finance.
\newblock {\em Math. Finance}, 7(1):1--71.

\bibitem[Lemor et~al., 2006]{Lemor2006}
Lemor, J., Gobet, E., and Warin, X. (2006).
\newblock Rate of convergence of an empirical regression method for solving
  generalized backward stochastic differential equations.
\newblock {\em Bernoulli}, 12:889--916.

\bibitem[Lepeltier and Martin, 1997]{Lepeltier1997}
Lepeltier, J.~P. and Martin, J.~S. (1997).
\newblock Backward stochastic differential equations with continuous generator.
\newblock {\em Statist. Probab. Lett.}, 32(425--430).

\bibitem[Ma et~al., 2002]{Ma2002}
Ma, J., Protter, P., Mart\'in, J.~S., and Torres, S. (2002).
\newblock Numerical method for backward stochastic differential equations.
\newblock {\em Ann. Appl. Probab.}, 12:302--316.

\bibitem[Ma et~al., 1994]{Ma1994}
Ma, J., Protter, P., and Yong, J. (1994).
\newblock Solving forward-backward stochastic differential equations
  explicity-a four step scheme.
\newblock {\em Probab. Theory Related Fields}, 98(3):339--359.

\bibitem[Ma et~al., 2009]{Ma2009}
Ma, J., Shen, J., and Zhao, Y. (2009).
\newblock On numerical approximations of forward-backward stochastic
  differential equations.
\newblock {\em SIAM J. Numer. Anal.}, 46:2636--2661.

\bibitem[Ma and Zhang, 2005]{Ma2005}
Ma, J. and Zhang, J. (2005).
\newblock Representations and regularities for solutions to bsdes with
  reflections.
\newblock {\em Stoch. Proc. Appl.}, 115:539--569.

\bibitem[Martinez and Martinez, 2007]{Martinez2007}
Martinez, W.~L. and Martinez, A.~R. (2007).
\newblock {\em Computational statistics handbook with Matlab}.
\newblock CRC Press, Taylor \& Francis Group, Boca Raton, US.
\newblock Second Edition.

\bibitem[Milsetin and Tretyakov, 2006]{Milstein2006}
Milsetin, G.~N. and Tretyakov, M.~V. (2006).
\newblock Numerical algorithms for forward-backward stochastic differential
  equations.
\newblock {\em SIAM J. Sci. Comput.}, 28:561--582.

\bibitem[Pardoux and Peng, 1990]{Pardoux1990}
Pardoux, E. and Peng, S. (1990).
\newblock Adapted solution of a backward stochastic differential equations.
\newblock {\em System and Control Letters}, 14:55--61.

\bibitem[Pardoux and Peng, 1992]{Pardoux1992}
Pardoux, E. and Peng, S. (1992).
\newblock Backward stochastic differential equation and quasilinear parabolic
  partial differential equations.
\newblock {\em Lectures Notes in CSI.}, 176:200--217.

\bibitem[Peng, 1991]{Peng1991}
Peng, S. (1991).
\newblock Probabilistic interpretation for systems of quasilinear parabolic
  partial differential equations.
\newblock {\em Stochastics and Stochastic Reports}, 37(1--2):61--74.

\bibitem[Ruijter and Oosterlee, 2015]{Ruijter2015}
Ruijter, M.~J. and Oosterlee, C.~W. (2015).
\newblock A fourier cosine method for an efficient computation of solutions to
  bsdes.
\newblock {\em SIAM J. Sci. Comput.}, 37(2):A859--A889.

\bibitem[Stulz, 1982]{Stulz1982}
Stulz, R.~M. (1982).
\newblock Options on the minimum or the maximum of two risky assets.
\newblock {\em J. Financial Econ.}, 10(2):161--185.

\bibitem[Teng, 2018]{Teng2018}
Teng, L. (2018).
\newblock A multi-step scheme based on cubic spline for solving backward
  stochastic differential equations.
\newblock Preprint 18/13, University of Wuppertal, available on webpage at
  \url{https://www.imacm.uni-wuppertal.de/fileadmin/imacm/preprints/2018/imacm_18_13.pdf}.

\bibitem[Zhang et~al., 2013]{Zhang2013}
Zhang, G., Gunzburger, M., and Zhao, W. (2013).
\newblock A sparse-grid method for multi-dimensional backward stochastic
  differential equations.
\newblock {\em J. Comput. Math.}, 31(3):221--248.

\bibitem[Zhang, 2004]{Zhang2004}
Zhang, J. (2004).
\newblock A numerical scheme for bsdes.
\newblock {\em Ann. Appl. Probab.}, 14:459--488.

\bibitem[Zhao et~al., 2006]{Zhao2006}
Zhao, W., Chen, L., and Peng, S. (2006).
\newblock A new kind of accurate numerical method for backward stochastic
  differential equations.
\newblock {\em SIAM J. Sci. Comput.}, 28(4):1563--1581.

\bibitem[Zhao et~al., 2014]{Zhao2014}
Zhao, W., Fu, Y., and Zhou, T. (2014).
\newblock New kinds of high-order multistep schemes for coupled forward
  backward stochastic differential equations.
\newblock {\em SIAM J. Sci. Comput.}, 36(4):A1731--A1751.

\bibitem[Zhao et~al., 2013]{Zhao2013}
Zhao, W., Li, Y., and Ju, L. (2013).
\newblock Error estimates of the crank-nicolson scheme for solving backward
  stochastic differential equations.
\newblock {\em Int. J. Numer. Anal. Model.}, 10(4):876--898.

\bibitem[Zhao et~al., 2012]{Zhao2012}
Zhao, W., Li, Y., and Zhang, G. (2012).
\newblock A generalized $\theta$-scheme for solving backward stochastic
  differential equations.
\newblock {\em Discrete Cont. Dyn-B.}, 17(5):1585--1603.

\bibitem[Zhao et~al., 2009]{Zhao2009}
Zhao, W., Wang, J., and Peng, S. (2009).
\newblock Error estimates of the $\theta$-scheme for backward stochastic
  differential equations.
\newblock {\em Discrete Contin. Dyn. Syst. Ser. B}, 12:905--924.

\bibitem[Zhao et~al., 2010]{Zhao2010}
Zhao, W., Zhang, G., and Ju, L. (2010).
\newblock A stable multistep scheme for solving backward stochastic
  differential equations.
\newblock {\em SIAM J. Numer. Anal.}, 48:1369--1394.

\end{thebibliography}
\bibliographystyle{apalike}
\end{document}